\numberwithin{equation}{section}
\newcommand{\bel}{\begin{eqnarray}\label}
\newcommand{\eel}{\end{eqnarray}}
\newcommand{\bes}{\begin{eqnarray*}}
\newcommand{\ees}{\end{eqnarray*}}
\newcommand{\bei}{\begin{itemize}}
\newcommand{\eei}{\end{itemize}}
\newcommand{\beiftnt}{\begin{itemize}\footnotesize}
\def\benu{\begin{enumerate}}
\def\eenu{\end{enumerate}}
\def\real{{\mathbb{R}}}
\def\R{{\real}}
\def\E{{\mathbb{E}}}
\def\P{{\mathbb{P}}}
\def\complex{\mathop{{\rm I}\kern-.58em\hbox{\rm C}}\nolimits}
\def\Rem{\hbox{\rm Rem}}
\def\sgn{\hbox{\rm sgn}}
\DeclareMathOperator{\trace}{trace}
\def\Var{\hbox{\rm Var}}
\DeclareMathOperator{\supp}{supp}
\def\mathbold{\boldsymbol} 
\def\calB{{\cal B}}
\def\bfe{\mathbold{e}}
\def\fhat{\widehat{f}}
\def\bh{\mathbold{h}}
\def\calH{{\cal H}}\def\scrH{{\mathscr H}}
\def\bI{\mathbold{I}}
\def\calI{{\cal I}}
\def\calN{{\cal N}}
\def\calT{{\cal T}}
\def\bu{\mathbold{u}}
\def\bv{\mathbold{v}}
\def\bw{\mathbold{w}}
\def\bx{\mathbold{x}}
\def\calX{{\cal X}}
\def\by{\mathbold{y}}
\def\ep{\varepsilon}\def\veps{\varepsilon}
\def\bep{ {\mathbold{\ep} }}
\def\bzeta{\mathbold{\zeta}}
\def\btheta{\mathbold{\theta}}
\def\lam{\lambda}
\def\bSigma{\mathbold{\Sigma}}
\declaretheorem[name=Theorem,numberwithin=section]{theorem}
\declaretheorem[name=Corollary,sibling=theorem]{corollary}
\crefname{assumption}{assumption}{assumptions}
\def\calB{{\mathscr B}}
\def\RR{\R}
\def\EE{\E}
\def\bias{\hbox{\rm bias}}
\def\scrH{\calH}
\begin{document}

\title{
High-Order Statistical Functional Expansion and
Its Application To Some Nonsmooth Problems
}
\runtitle{High-Order Statistical Functional Expansion}
\author{Fan Zhou$^{1}$, Ping Li$^{1}$ and Cun-Hui Zhang$^{2}$}
\affiliation{Baidu Research$^{1}$ and Rutgers University$^{2}$
\thanks{The work of Cun-Hui Zhang was conducted as a consulting researcher at Baidu Research -- 10900 NE 8th St. Bellevue, WA 98004, USA}
}
\date{\today}

\begin{abstract}
Let $\bx_j = \btheta +\bep_j, j=1,...,n$, be observations of an unknown parameter $\btheta$ in a Euclidean or separable Hilbert space $\scrH$, where $\bep_j$ are noises as random elements in $\scrH$ from a general distribution. We study the estimation of $f(\btheta)$ for a given functional $f:\scrH\rightarrow \RR$ based on $\bx_j$'s. The key element of our approach is a new method which we call High-Order Degenerate Statistical Expansion. It leverages the use of classical multivariate Taylor expansion and degenerate $U$-statistic and yields an elegant explicit formula. In the univariate case of $\scrH=\R$, the formula expresses the error of the proposed estimator as a sum of order $k$ degenerate $U$-products of the noises (with no tied index for each $k$-product) with coefficient $f^{(k)}(\btheta)/k!$ and an explicit remainder term in the form of the Riemann-Liouville integral as in the Taylor expansion around the true $\btheta$. For general $\scrH$, the formula expresses the estimation error in terms of the inner product of $f^{(k)}(\btheta)/k!$ and the average of the tensor products of $k$ noises with distinct indices and a parallel extension of the remainder term from the univariate case. This makes the proposed method a natural statistical version of the classical Taylor expansion. The proposed estimator can be viewed as a jackknife estimator of an ideal degenerate expansion of $f(\cdot)$ around the true $\btheta$ with the degenerate $U$-product of the noises, and can be approximated by bootstrap. Thus, the jackknife, bootstrap and Taylor expansion approaches all converge to the proposed estimator. We develop risk bounds for the proposed estimator under proper moment conditions and a central limit theorem under a second moment condition (even in expansions of higher than the second order).  We apply this new method to generalize several existing results with smooth and nonsmooth $f$ to universal $\bep_j$'s with only minimum moment constraints.
\end{abstract}

\maketitle

\section{Introduction}

We consider
\begin{equation}
	\label{model}
	\bx_j = \btheta +\bep_j,~j=1,...,n
\end{equation}
where $\btheta$ is an unknown parameter and belongs to a finite-dimensional Euclidean or a separable Hilbert space $\scrH$, and $\bep_j$ are mean zero random noises in $\scrH$ with general distributions.
The goal of this article is to study the estimation of $f(\btheta)$ for a given functional $f: \scrH \rightarrow \RR$ when the complexity parameter of the problem is large under only minimal moment constraints.

The study of the estimation of functionals of high-dimensional or infinite-dimensional parameter has a long history.
Notable results include but not limited to~\cite{levit1976efficiency,levit1978asymptotically,ibragimov1986some,bickel1988,nemirovskii1991necessary,birge1995estimation,laurent1996efficient,lepski1999estimation,nemirovski2000topics,ibragimov2013statistical}. Two types of special functionals are extensively studied, namely the linear and quadratic functionals.
Results on the estimation of linear functionals include~\cite{donoho1987minimax,donoho1991geometrizing,klemela2001sharp,cai2005adaptive} and the references therein.
Results on the estimation of quadratic functionals include~\cite{donoho1990minimax,laurent2000adaptive,bickel2003,cai2005nonquadratic,klemela2006sharp} and the references therein.
Recently there is a noticeable surge of interest
in efficient and minimax rate optimal estimation of functionals of parameter in high dimensional models or models with growing dimension, see~\cite{collier2017minimax,doi:10.1111/rssb.12026,van2014asymptotically,koltchinskii2021,zhou2019fourier}.
However, these papers are mostly focused on the case of Gaussian noise which is
quite restrictive in practice.  Thus it is of great interest to develop robust inferential procedures that are less sensitive to the distributional assumptions.
This is a major motivation for us to study
the functional estimation of high-dimensional parameters under low moment constraints on the noise distribution.

A straightforward approach to this problem is to use the plug-in estimator $f(\bar{\bx})$ with the sample mean $\bar{\bx}$ of the observations.
One may think of $f(\bar{\bx})$ as a good estimator as $\bar{\bx}$ is the maximum likelihood estimator of
$\btheta$ when the noise $\bep_j$ are Gaussian. However, interestingly, our problem is more delicate.
Even if in the Gaussian shift model,
there are situations where the plug-in estimator is sub-optimal
due to its large bias as soon as the complexity parameter characterized by the effective rank of $\bep$'s covariance operator $r(\bSigma)$ is of greater order than $n^{1/2}$.
Especially, a recent work
\cite{koltchinskii2021} has shown that when $\scrH = \RR^d$ under the Gaussian shift model with $r(\bSigma) = n^{\alpha}$, $\alpha\in (0,1]$, there are functionals $f$ with smoothness index $1< s<1/(1-\alpha)$ such that no $\sqrt{n}$-consistent
estimators of $f(\btheta)$ exists.
Thus, efficient estimation of $f(\btheta)$ under model (\ref{model}) poses a challenging problem.

\subsection{Related works}
Several methods were proposed recently to address the problem in the Gaussian shift model.
One of them, developed in a series of works \cite{koltchinskii2020asymptotically,koltchinskii2021,koltchinskii2019estimation},
is to use an iterative bootstrap technique to correct the bias of the plug-in estimator.
We briefly summarize the idea of iterative bootstrap as follows.
Denote by $\calT$ the linear operator given by
\begin{equation*}
	\calT g(\btheta):= \EE_{\btheta}g(\btheta) = \EE g(\btheta+ \bar{\bep}),
\end{equation*}
where $\bar{\bep}=n^{-1} \sum_{j=1}^n \bep_j$.
Let $\calI$ be the identity operator and $\calB := \calT-\calI$.
To create an estimator $g(\bar{\bx})$ of $f(\btheta)$ with small bias,
it is tempting to solve the integral equation
\bes
\calT g(\btheta) = (\calI + \calB) g(\btheta) = f(\btheta)
\ees
as accurately as possible.
However, solving such an integral equation itself is challenging. Instead, the authors used a natural finite approximation of the Neumann series $(\calI + \calB)^{-1} = (\calI -\calB +\calB^2 - \calB^3 +\dots)$ to create the following estimator
\begin{equation}
	\label{estimator_0}
	f_{k}(\bar{\bx}) := \sum_{j=0}^{k}(-1)^j\calB^j f(\bar{\bx})
\end{equation}
with
\begin{equation}
\label{bootstrap}
    \calB^j f(\bar{\bx}) := \EE_{\cdot |\bar{\bx}}\bigg[f^{(j)}\bigg(\bar{\bx}+\sum_{\ell=1}^j \tau_{\ell}\tilde{\bep}_{\ell}\bigg) (\tilde{\bep}_1,\dots,\tilde{\bep}_j)  \bigg]
\end{equation}
where $\EE_{\cdot |\bar{\bx}}$ is the conditional expectation taking over the i.i.d. random variables
$\{\tau_i\}_{i=1}^j \sim U[0,1]$ and i.i.d. bootstrap samples $\{\tilde{\bep}_i\}_{i=1}^j$ of $\bar{\bep}$.
It was shown that \eqref{bootstrap} holds exactly for $\calB=\calT-\calI$ so that
the bias of \eqref{estimator_0} is exactly $(-1)^{k}\calB^{k+1}f(\btheta)$. Intuitively, the main idea is that given
the condition that the bootstrap operator $\calB$ is small,
for relatively small noise $\bep$ and sufficiently smooth $f$,
the bias should also be small. Thus bias reduction can be achieved.
In practice, the expectation in \eqref{bootstrap} can be replaced by averaging samples obtained from Monte-Carlo simulations, of which performance can be guaranteed by law of large numbers \cite{zhou2021icml}.

{Another work \cite{jiao2020} compared multiple jackknife, iterative bootstrap and Taylor expansion approaches
in a study of the estimation of
$f(\theta)$ for a given $f\in C[0,1]$ based on Bernoulli$(\theta)$ observations.
Although \cite{jiao2020} studied the problem in a very specific classical model,
the methods they used to approach the problem are closely related to ours.
Especially, in Sec. A of their paper, they
proposed a multiple jackknife estimator
\begin{equation}
    \hat{f}_m := \sum_{k=1}^m C_k \frac{n_k!}{n!}\sum_{1\le i_1\neq \cdots \neq i_{n_k}\le n}
    f\bigg(\frac{1}{n_k}\sum_{j=1}^{n_k} x_{i_j}\bigg),
\end{equation}
with sample sizes $n_1<n_2<\dots<n_m \leq n$ and constants $C_k$ satisfying
$\sum_{k=1}^m C_k/n_k^\rho = I\{ \rho=0\}$ for $\rho=0,\ldots,m-1$.
This estimator makes use of $U$-statistics from sub-sampling data to cancel each other's bias
in order to achieve overall bias-reduction when the bias of $f(\bar{\bx})$ is a sum of
$\sum_{\rho=1}^{m-1} \bias_\rho/n^\rho$ and a higher order term. In Sec. C, they 
generalized the bias correction technique of first order Taylor expansion to higher orders in an iterative way.
However, as they noted in \cite{jiao2020}, an iterative bias correction into Taylor series of higher orders can be tedious and computationally intensive even in the one dimensional case. As a result, they proposed to use the sample splitting technique and considered the Taylor expansion of $f(\hat{\theta})$ at $\theta$:
\begin{equation}
    \hat{f}_m:=\sum_{k=0}^{2m-1} \frac{f^{(k)}(\hat{\theta}^{(1)})}{k!}
    \sum_{j=0}^k {k \choose j} \hat{\theta^j}^{(2)}(-\hat{\theta}^{(1)})^{k-j}
\end{equation}
where $\hat{\theta^j}^{(2)}$ is an unbiased estimator of $\theta^j$,
and $\hat{\theta}^{(1)}$ and $\hat{\theta^j}^{(2)}$ are obtained from split samples
independent of each other.}

In a series of works \cite{cai2011,collier2017minimax,collier2020,collier2019minimax},
the authors studied rate minimax estimation of $f(\btheta)$ under Gaussian shift model
when
$f(\btheta) = \|\btheta\|_p$
or $f(\btheta) = \|\btheta\|_p^p$.
The main idea of their works is to use Hermite polynomials to approximate the given functional,
possibly with some delicate truncation on the magnitude of each observational coordinate to address sparsity presented in $\btheta$.

Based on the seminal work on unbiased estimation \cite{kolmogorov1950unbiased} and the fruitful idea of Littlewood-Paley theory, \cite{zhou2019fourier} proposed a Fourier analytical estimator
\begin{equation}
	\label{estimator: fourier}
	\hat{f}_R(\bar{\bx}) :=   \frac{1}{(2\pi)^{d/2}}   \int_{\Omega}  \mathcal{F} f(\bzeta)e^{\langle \bSigma \bzeta, \bzeta \rangle/2n} e^{i\bzeta \cdot \bar{\bx}} d\bzeta,
\end{equation}
where $\Omega:= \{\bzeta: \|\bzeta\|\leq R \} \subset \RR^d$ is a properly truncated region in the frequency domain. The main idea is that the factor $e^{\langle \bSigma \bzeta, \bzeta \rangle/2n}$ in (\ref{estimator: fourier}) exactly cancels the characteristic function $\EE\big[e^{i\bzeta^T\bar{\bep}}\big]$ of the noise which makes $\hat{f}_R(\bar{\bx})$ an unbiased
estimator of the analytical part of $f$ and yet the remainder can be uniformly small. Thus, overall bias reduction is achieved.

In addition to the setting when $\btheta\in\RR^d$ resides in a finite-dimensional space, there are also some progress recently in the related problems in nonparametric domain where $\theta:\RR^d \rightarrow \RR$ belongs to some multivariate density class and $f(\theta)$ is a known functional of $\theta$. For instance, \cite{berrett2019efficient} studied the estimation of differential entropy of $\theta$:
$f(\theta)=-\int_{\bx\in\calX} \theta(\bx) \log\theta(\bx) d\bx$ when $\theta$ belongs to certain H\"{o}lder-type class. The authors generalized the idea of
the Kozachenko-Leonenko estimator \cite{zbMATH04030688} by introducing a deliberately constructed weighted version, and proved its efficiency and asymptotic normality for arbitrary $d$ while relaxed the support of $\theta$ to be unbounded. In another work, \cite{han2020optimal} established the minimax optimal rate of the same problem over certain general Lipschitz balls with smoothness index $s\in(0,2]$.
Their results hold when $\theta$ doesn't need to be bounded away from zero and can have unbounded support. Their approach was based on using kernel smoothing estimator $\theta_h$ of $\theta$, and then to estimate $f(\theta)$ by constructing an estimator of $f(\theta_h)$ with polynomial approximation, Taylor expansion and $U$-statistic techniques.

\subsection{Our contribution}
Although the methods we mentioned above are versatile and feasible in each of their settings, they are also either computationally intensive when it comes to implementation or limited by the form of the functional and/or the distribution type of the noise. In this article, inspired by \eqref{estimator: fourier}, we go back to the classical multivariate Taylor expansion and leverage the use of degenerate $U$-statistic, and propose a new method which we call High-Order Degenerate Statistical Expansion (HODSE). The new method leads to a unified estimator of $f(\btheta)$ with a remarkably neat explicit formula for high-order expansions.
What's more important, the resulting new estimator miraculously and systematically
cancels the bias and all other non-degenerate terms up to any preassigned order in
estimating the intermediate derivatives when using classical Taylor series to reduce bias,
with an explicit Riemann-Liouville integral formula for the remainder term
as in the analytical Taylor expansion.  This makes HODSE a natural
statistical version of the classical Taylor expansion.

The paper is organized as follows.
In Section 2, we introduce HODSE with the explicit formula for its degenerate expansion,
explain its differences as well as connections to the classical Taylor series which is commonly adopted,
and discuss its connection to jackknife and bootstrap.
In Section 3, we establish upper bounds on the $L_p$- risk ($p\geq 1$) for the estimation of $f(\btheta)$ under H\"{o}lder smoothness with only minimal moment constraints on the noise. These generalize the previous results in \cite{koltchinskii2021} under Gaussian shift model using iterative bootstrap which are shown to be asymptotically efficient and minimax optimal when $\scrH = \RR^d$ and $r(\bSigma) = d$. In Section 4, we establish the asymptotic normality of the HODSE estimator under only finite second moment of the noise, including the cases where higher moments appear in the statistical expansion.
In Section 5, we apply HODSE to the estimation of
$f(\btheta) = \|\btheta\|_p^p/d$ in the non-smooth case $0< p \le 1$ to generalize the minimax rate optimal estimation results in the Gaussian shift model \cite{cai2011} to a wider class of noise distributions. Moreover, it shows that the power of HODSE is not limited to smooth functional estimation, but also it can be applied to non-smooth functional estimation.

To summarize, we proposed a new method that naturally
provides a statistical version of the classical Taylor expansion.
The resulting estimator not only yields a neat and elegant formula, but also can be applied to a broad range of smooth and non-smooth functional estimation problems and thus achieves universality.

\subsection{Preliminaries and Notations}
\label{section: prelim}

Throughout this paper,
we assume for simplicity that the domain $\scrH$ of $f(\bx)$ is a Euclidean or separable Hilbert space. Let $\langle\cdot,\cdot\rangle$ be the inner product of $\scrH$ and $\|\cdot\|$ the induced norm, e.g. $\langle\bu,\bv\rangle = \bu^\top\bv$ and $\|\bu\|=\|\bu\|_2$ for $\scrH=\R^d$.
We use boldface lowercase letter $\bx$ to denote a member of $\scrH$ and boldface uppercase letter $\bSigma$ to denote a bounded linear operator in $\scrH$, e.g. the covariance matrix of a random vector for $\scrH=\R^d$.
We denote by $\otimes_{\ell=1}^k\bu_\ell = \bu_1\otimes\cdots\otimes \bu_k$ the Kronecker product of $k$ members of $\scrH$ as a rank-one $k$-linear form $(\bv_1,\ldots,\bv_k)\to \prod_{\ell=1}^k \langle \bu_\ell,\bv_\ell\rangle$, and $\scrH^{\otimes k}$ the set of all continuous $k$-linear forms in $\scrH$ as  the completion of all the linear combinations of the rank-one $k$-linear forms. The space $\scrH^{\otimes k}$ is equipped with the inner-product $\langle\cdot,\cdot\rangle_k$ determined by the bilinear extension of $\langle\otimes_{\ell=1}^k \bu_\ell,\otimes_{\ell=1}^k\bv_\ell\rangle_k = \prod_{\ell=1}^k \langle \bu_\ell,\bv_\ell\rangle$ in $\scrH^{\otimes k}$, the Hilbert-Schmidt norm $\|\cdot\|_{\rm HS}$ induced by the inner-product $\langle\cdot,\cdot\rangle_k$, and the tensor spectral norm
\bes
\big\|T^{(k)}\big\|_{\rm S} := \max_{\|\bh_1\| = \cdots = \|\bh_k\|=1} \Big|\big\langle T^{(k)}, \otimes_{j=1}^k \bh_j\big\rangle_{k}\Big|,\
T^{(k)}\in \scrH^{\otimes k}.
\ees
A member $T^{(k)}$ in $\scrH^{\otimes k}$ is symmetric if $\langle T^{(k)},\otimes_{\ell=1}^k \bv_\ell \rangle_k = \langle T^{(k)},\otimes_{\ell=1}^k\bv_{i_\ell}\rangle_k$ for all permutations $(i_1,\ldots,i_k)$ of $(1,\ldots,k)$. We denote by $\otimes$ the Kronecker product as the bilinear mapping from $(\scrH^{\otimes j},\scrH^{\otimes k})$ to $\scrH^{\otimes(j+k)}$ determined by $(\otimes_{\ell=1}^j\bu_\ell,\otimes_{\ell=1}^k\bv_\ell)\to \otimes_{\ell=1}^j\bu_\ell \otimes_{\ell=1}^k\bv_\ell$. We denote by $\times_j$ the mode $j$ tensor product determined by
$T^{(k)}\times_j \bSigma: \otimes_{\ell=1}^k \bv_\ell
\to \langle T^{(k)},\otimes_{\ell=1}^{j-1} \bv_\ell\otimes (\bSigma\bv_j)\otimes_{\ell=j+1}^k \bv_\ell\rangle_k$ as a $k$-linear form, and
$T^{(k)}\times_j \bu: \otimes_{\ell=1}^{j-1} \bv_\ell \otimes_{\ell=j+1}^k \bv_\ell
\to \langle T^{(k)},\otimes_{\ell=1}^{j-1} \bv_\ell\otimes \bu \otimes_{\ell=j+1}^k \bv_\ell\rangle_k$ as a $(k-1)$-linear form.

Unless otherwise stated we assume $f : \scrH\rightarrow \RR$ is m-times Fr\'echet differentiable at $\btheta$ with $f^{(k)}(\btheta)$ being the $k$-th derivative as a
symmetric $k$-linear form in $\scrH^{\otimes k}$ with finite spectral norm $\|f^{(k)}(\btheta)\|_S$,
$1\leq k \leq m$. For any $s>0$, the H\"older norm of order $s$ is defined with  $m = \lceil s\rceil-1$ by
\bel{Holder-s}
\|f\|_{(s)} := \sup_{\bx, \by\in \scrH;\|\bx-\by\|\neq 0}
\bigg\{\frac{\|f^{(m)}(\bx) - f^{(m)}(\by)\|_{\rm S}}{\|\bx-\by\|^{s-m}}\bigg\}.
\eel
We may also use the H\"{o}lder smoothness of $f$ induced by the Hilbert-Schmidt norm to
\bes
\|f\|_{(s),\rm HS} := \sup_{\bx,\by\in \scrH;\|\bx-\by\|\neq 0}\bigg\{\frac{\|f^{(m)}(\bx) - f^{(m)}(\by)\|_{\rm HS}}{\|\bx-\by\|^{s-m}}\bigg\}.
\ees
For notational simplicity, we suppress the dependence
of the spectral and Hilbert-Schmidt norms on the tensor order $k$.
We note that when $\scrH=\RR$, $\|f^{(k)}(x)\|_{\rm S} = \|f^{(k)}(x)\|_{\rm HS} = |f^{(k)}(x)|$
and $\|f\|_{(s),HS}=\|f\|_{(s)}$.

For $p>0$, we denote the $\ell_p$ norm by
$\|v\|_p = (\sum_{j=1}^d |v_j|^p)^{1/p}$ for $v =(v_1,\ldots,v_d)^\top \in\R^d$ and
the $L_p$ norm by $\|X\|_{L_p} = \big(\EE\big[|X|^p\big]\big)^{1/p}$
for random variables $X\in\RR$ and
$\|\bep\|_{L_p} = \big(\EE\big[\|\bep\|^p\big]\big)^{1/p}$ for random elements $\bep\in\scrH$.

We use $\mathcal{F}$ and $\mathcal{F}^{-1}$ to denote the Fourier transform (FT) and inverse Fourier transform (IFT) respectively.
We use the conventional notation ``$\Rightarrow$"  to denote weak convergence, i.e. convergence in distribution and use ``$\xrightarrow{p}$" to denote convergence in probability.
Throughout the paper, given nonnegative numbers $a$ and $b$, $a\lesssim b$ means that $a\leq Cb$ for a numerical constant $C$, and $a \asymp b$ means that $a \lesssim b$ and $b \lesssim a$,
$a \wedge b = \min \{ a, b\}$ and $a \vee b = \max \{ a, b\}$. We use $C_{a}$ to denote a constant depending on $a$ only.

\section{High-order degenerate statistical expansion}\label{sec:method}

We are interested in the estimation of $f(\btheta)$ with a known function $f$ and
a high- or infinite-dimensional unknown parameter $\btheta$ based on observations $\bx_j$ with mean $\EE[\bx_j]=\btheta$.
Throughout this section we assume that
$f(\cdot): \scrH\to\RR$ is $m$ times
Fr\'echet differentiable at $\btheta$ so that the following expansion holds,
\bes
f(\btheta + \bh) = f(\btheta) + \sum_{k=1}^m \frac{\big\langle f^{(k)}(\btheta), \bh^{\otimes k}\big\rangle_k}{k!}+ o(\|\bh\|^m)
\ees
for symmetric $k$-linear forms $f^{(k)}(\btheta)$ with finite spectral norm $\big\|f^{(k)}(\btheta)\big\|_{\rm S}$, $1\le k\le m$.

Consider the estimation of $f(\btheta)$ based on independent data points $\bx_j\in\scrH, 1\le j\le n$, with $\E[\bx_j]=\btheta$.
Let $\bep_j=\bx_j-\btheta$, $\bar{\bx} = \sum_{j=1}^n \bx_j/n$ and $\bar{\bep} = \bar{\bx}-\btheta$.
The plug-in estimator $f(\bar{\bx})$ has the Taylor expansion
\bel{new-exp-1}
f(\bar{\bx}) = f(\btheta) + \sum_{k=1}^m \frac{\big\langle f^{(k)}(\btheta), \bar{\bep}^{\otimes k}\big\rangle_k}{k!}
+ O(1)\|f\|_{(s)} \|\bar{\bep}\|^s
\eel
with the H\"older norm $\|f\|_{(s)}$ in \eqref{Holder-s}.
When the remainder term is sufficiently small for $1<s\le 2$,
$f(\bar{\bx})$ is asymptotically linear and the CLT holds under the Lindeberg condition.
We note that the Lindeberg condition is needed in our setting
even when $\bep_j$ are i.i.d. with zero mean and finite variance
because the distribution of $\big\langle f^{(1)}(\btheta), \bep_j\big\rangle_1$ typically changes
when $\btheta \in \RR^d$ and $d = d_n\to\infty$.

Let $\bSigma = \EE[n^{-1}\sum_{j=1}^n \bep_j\otimes\bep_j]$,
$\sigma = \|\bSigma\|_{\rm S}^{1/2}$ and $r(\bSigma) = \hbox{\rm trace}(\bSigma)/\sigma^2\ge 1$ be the effective rank of $\bSigma$,
where $\trace(\bSigma) = \langle \bI_{\scrH}, \bSigma\rangle_2$ with the identity operator $\bI_{\scrH}$ in $\scrH$.
The plug-in estimator can be biased
when we use higher order approximations, say $s>2$ in \eqref{new-exp-1}, to control the remainder term.
For example, by comparing the usual bounds for the standard deviation of the linear term and
the bias in the second order expansion when $\bep_j$'s are i.i.d. copies of an isotropic $\bep$,
\bes
\big\{\EE\big[\big\langle f^{(1)}(\btheta), \bar{\bep}\big\rangle_1^2\big]\big\}^{1/2}
&=& \big\langle f^{(1)}(\btheta)\otimes f^{(1)}(\btheta),\bSigma/n\big\rangle_2^{1/2}
\asymp \|f^{(1)}(\btheta)\|\sigma/n^{1/2},\
\cr \EE\big[\big\langle f^{(2)}(\btheta), \bar{\bep}^{\otimes 2}\big\rangle_2 \big]
&=& \big\langle f^{(2)}(\btheta), \bSigma\big\rangle_2/n
\le \|f^{(2)}(\btheta)\|_{\rm S}\sigma^2r(\bSigma)/n,
\ees
we find that the bias is typically of smaller order than the linear term when
\bes
\|f^{(2)}(\btheta)\|_{\rm S}\big(\sigma r(\bSigma)/n^{1/2}\big) \ll \|f^{(1)}(\btheta)\|,
\ees
which requires $r(\bSigma)\ll n^{1/2}$ when $\|f^{(2)}(\btheta)\|_{\rm S}$ and $\|f^{(1)}(\btheta)\|$
are of the same order.
Thus, bias correction may be needed when $r(\bSigma)\gg n^{1/2}$.

To remove the bias in the case of $s>2$, one may
subtract from $f(\bar{\bx})$ some estimator $\widehat{\bias_{\,2}}$ for
$\bias_{\,2} = \big\langle f^{(2)}(\btheta), \sum_{j=1}^n \bep_j^{\otimes 2}/(2n^2)\big\rangle_2$ in
the expansion of $\big\langle f^{(2)}(\btheta), \bar{\bep}^{\otimes 2}\big\rangle_2/2$.
For $s\ge 3$, we also want to remove
$\big\langle f^{(3)}(\btheta), \sum_{j=1}^n \bep_j^{\otimes 3}/(6n^3)\big\rangle_3$
and the third order bias term in the expansion of $-\widehat{\bias_{\,2}}$, so on and so forth.
This approach was discussed in detail in \cite{jiao2020} in the one dimensional case
in the Bernoulli model. As the authors of \cite{jiao2020} noted, even in the one dimensional case,
such an ad hoc process can be tedious and hard to keep track of when dealing with higher-order correction terms. Yet our case is more delicate and can be high dimensional.
One may also work with the Taylor expansion
\bes
f(\btheta) = f(\bar{\bx}) + \sum_{k=1}^m
\frac{\big\langle f^{(k)}(\bar{\bx}), \bar{\bep}^{\otimes k}\big\rangle_k}{(-1)^k k!}
+ O(1)\|f\|_{(s)} \|\bar{\bep}\|^s
\ees
at the observable $\bar{\bx}$ but similar issues arise when we deal with the unobservable $\bar{\bep}^{\otimes k}$.

In this paper, we present a unified yet very neat formula for bias correction which was aimed at achieving
\bel{new-est-1}
\widehat{f} \approx f(\btheta) + \sum_{k=1}^m \frac{\big\langle f^{(k)}(\btheta), \bar{\bep}^{(k)}\big\rangle_k}{k!},
\eel
with a remainder term of higher order than $m$, where
\bel{new-eps-(k)}
\bar{\bep}^{(k)} := \sum_{1\le j_1\neq \cdots \neq j_k\le n}
\frac{\bep_{j_1}\otimes \cdots\otimes \bep_{j_k}}{n(n-1)\cdots(n-k+1)}
\eel
are completely degenerate $U$-tensors of order $k$.
As $\bar{\bep}^{(k)}$ are completely degenerate,
the bias of \eqref{new-est-1}, $\EE\big[\widehat{f} - f(\btheta)\big]$,
is identical to the expectation of the remainder term in this expansion
and thus is expected to be of higher order than the $m$-th order.
An estimator of form \eqref{new-est-1}
may also achieve variance reduction by removing terms like
$\big\langle f^{(3)}(\btheta), \sum_{j_1\neq j_2} \bep_{j_1}^{\otimes 2}\otimes \bep_{j_2}/(6n^3)\big\rangle_3$
in the expansion of the plug-in estimator $f(\bar{\bx})$.

Given the order $s$ of the H\"older smoothness of $f$,
we propose to estimate $f(\btheta)$ by
\bel{new-est-2}
\widehat{f} : = f(\bar{\bx}) + \sum_{1\le k\le m} \frac{\langle f^{(k)}(\bar{\bx}),\bar{\bu}^{(k)}\rangle_k}{k!}
\eel
with $m = \lceil s\rceil-1$ and $U$-statistics
\bel{new-u-(k)}
\bar{\bu}^{(k)} := \sum_{1\le j_1\neq \cdots \neq j_k\le n}
\frac{(\bx_{j_1}-\bar{\bx})\otimes \cdots\otimes(\bx_{j_k} - \bar{\bx})}{n(n-1)\cdots(n-k+1)}.
\eel
We note that $\bar{\bu}^{(1)}={\bf 0}$ so that the sum in \eqref{new-est-2} actually runs from $k=2$ to $k=m$.

We advocate the use of \eqref{new-est-2},
a plug-in estimator of the right-hand side of \eqref{new-est-1},
as the proper statistical expansion of the plug-in estimator $f(\bar{\bx})$ to higher orders.
A remarkable feature of this natural formula is that the resulting estimator
automatically cancels out the impacts of estimating $f^{(k)}(\btheta)$ by $f^{(k)}(\bar{\bx})$
and $\bep_j$ by $\bx_j-\bar{\bx}$ on the right-hand side of \eqref{new-est-1}
simultaneously for all orders $k = 1,\ldots, m$.
Thus, the estimator \eqref{new-est-2} can be viewed as a degenerate statistical expansion
of function $f$.
This claim is formally justified by the following proposition which gives an explicit formula
for the remainder term in \eqref{new-est-1} when the estimator \eqref{new-est-2} is used.

\begin{restatable}{proposition}{PropStatTaylor}
\label{prop-1}
Let $\widehat{f}$ be as in \eqref{new-est-2} with $n\ge m \ge 2$.
Then, \eqref{new-est-1} holds,
\bel{new-prop-1-1}
\widehat{f} = f(\btheta) + \sum_{k=1}^m \frac{\big\langle f^{(k)}(\btheta), \bar{\bep}^{(k)}\big\rangle_k}{k!} - \Rem_m,
\eel
with the $\bar{\bep}^{(k)}$ in \eqref{new-eps-(k)} for the noise vectors $\bep_j=\bx_j-\btheta$
and with the remainder term
\bel{new-prop-1-2}
&& \Rem_m := \sum_{k=0}^{m}
\frac{\langle J^{m-k}\Delta^{(m)},
\bar{\bep}^{\otimes (m-k)}\otimes \bar{\bep}^{(k)}\rangle_m}{(-1)^{m-k}k!},
\eel
where $J^0h = h(1)$ and $J^\alpha h = \int_0^1 h(t)(1-t)^{\alpha -1}dt/\Gamma(\alpha)$
give the Riemann-Liouville integral operator
for $h: [0,\infty)\to \scrH^{\otimes m}$ and $\alpha>0$,
$\Delta^{(m)}(t) :=f^{(m)}(\bar{\bx} + t(\btheta-\bar{\bx})) - f^{(m)}(\bar{\bx})$ and
$\bar{\bep} = n^{-1}\sum_{j=1}^n\bep_j = \bar{\bx}-\btheta$.
\end{restatable}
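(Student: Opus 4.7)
The plan is to combine a combinatorial identity that unfolds each $\bar{\bu}^{(k)}$ into completely degenerate $U$-tensors with Taylor's theorem in Riemann--Liouville integral form. The only analytic input is the one-variable Taylor formula applied along the segment from $\bar{\bx}$ to $\btheta$; everything else is algebra. The first step is to establish
\begin{equation*}
\big\langle f^{(k)}(\bar{\bx}),\bar{\bu}^{(k)}\big\rangle_k
=\sum_{s=0}^k(-1)^s\binom{k}{s}\big\langle f^{(k)}(\bar{\bx}),\bar{\bep}^{\otimes s}\otimes\bar{\bep}^{(k-s)}\big\rangle_k,\qquad 1\le k\le m,
\end{equation*}
with the convention $\bar{\bep}^{(0)}=1$. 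I would prove this by substituting $\bx_{j_\ell}-\bar{\bx}=\bep_{j_\ell}-\bar{\bep}$, expanding $\otimes_{\ell=1}^k(\bep_{j_\ell}-\bar{\bep})$ into $2^k$ pieces indexed by subsets $S\subseteq[k]$, and exploiting the symmetry of $f^{(k)}(\bar{\bx})$ so that each contracted summand depends only on $|S|=s$. Summing over distinct $(j_1,\ldots,j_k)$ collapses the count for fixed $s$ to $\binom{k}{s}(n-(k-s))_s(n)_{k-s}=\binom{k}{s}(n)_k$, and the factor $(n)_k$ cancels the normalization in $\bar{\bu}^{(k)}$.

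Substituting this identity into \eqref{new-est-2} and swapping the order of summation so the outer index is $k'=k-s$, the $k'=0$ strand added to $f(\bar{\bx})$ reproduces the order-$m$ Taylor polynomial of $t\mapsto f(\bar{\bx}+t(\btheta-\bar{\bx}))$ at $t=1$. Taylor's theorem with integral remainder, combined with the decomposition $f^{(m)}(\bar{\bx}+t(\btheta-\bar{\bx}))=f^{(m)}(\bar{\bx})+\Delta^{(m)}(t)$ and the identity $J^{m}[f^{(m)}(\bar{\bx})]=f^{(m)}(\bar{\bx})/m!$, absorbs the terminal $k=m$ term and yields $f(\btheta)-(-1)^m\langle J^m\Delta^{(m)},\bar{\bep}^{\otimes m}\rangle_m$. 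For each $k'\ge 1$, I would apply the same argument to the scalar map $\phi_{k'}(t)=\langle f^{(k')}(\bar{\bx}+t(\btheta-\bar{\bx})),\bar{\bep}^{(k')}\rangle_{k'}$, whose $j$-th derivative equals $(-1)^j\langle f^{(k'+j)}(\bar{\bx}+t(\btheta-\bar{\bx})),\bar{\bep}^{\otimes j}\otimes\bar{\bep}^{(k')}\rangle_{k'+j}$; this identifies the inner sum $\sum_{j=0}^{m-k'}(-1)^j/j!\,\langle f^{(k'+j)}(\bar{\bx}),\bar{\bep}^{\otimes j}\otimes\bar{\bep}^{(k')}\rangle_{k'+j}$ as $\langle f^{(k')}(\btheta),\bar{\bep}^{(k')}\rangle_{k'}-(-1)^{m-k'}\langle J^{m-k'}\Delta^{(m)},\bar{\bep}^{\otimes(m-k')}\otimes\bar{\bep}^{(k')}\rangle_m$.

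Collecting the resulting identities across $k'=0,1,\ldots,m$ and dividing by $k'!$ assembles \eqref{new-prop-1-1} with $\Rem_m$ equal to the linear combination of Riemann--Liouville residues in \eqref{new-prop-1-2}; the coefficient $(-1)^{m-k}/k!$ of the $k$-th residue emerges automatically as the product of the Taylor-derivative sign $(-1)^{m-k}$ and the factorial $1/k!$. The main source of difficulty will be bookkeeping across the double-sum swap: verifying the falling-factorial cancellation $(n-(k-s))_s(n)_{k-s}=(n)_k$ in the combinatorial step, using the symmetry of $f^{(k'+j)}$ to keep tensor indices aligned so that the $k'$ free positions are contracted with $\bar{\bep}^{(k')}$ and the $j$ filled positions with $\bar{\bep}$, and combining the $(-1)^{k-k'}$ signs from the combinatorial identity with the $(-1)^j$ signs from iterated differentiation to yield the single sign $(-1)^{m-k}$ in the final remainder.
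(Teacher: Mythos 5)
Your proposal is correct and follows the same two-ingredient strategy as the paper's own proof: first the binomial identity expanding $\langle f^{(k)}(\bar{\bx}),\bar{\bu}^{(k)}\rangle_k$ into degenerate $U$-tensors (paper's \eqref{pf-prop-1-1}), then Taylor's theorem with Riemann--Liouville remainder applied to the scalar paths $\phi_{k'}(t)=\langle f^{(k')}(\bar{\bx}+t(\btheta-\bar{\bx})),\bar{\bep}^{(k')}\rangle_{k'}$, which are exactly the paper's functions $h_{k}(y)$. The only cosmetic difference is the direction of the double-sum reindexing (you start from $\widehat{f}$ and push toward the degenerate expansion, the paper starts from the degenerate expansion and pushes toward $\widehat{f}$), so the argument is essentially the same.
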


While the expansion \eqref{new-prop-1-1} looks similar to the expansion \eqref{new-exp-1},
the difference is highly significant in high-order analysis
because $\bar{\bep}^{(k)}$ in \eqref{new-eps-(k)} are completely degenerate $U$-tensors.
For i.i.d. data, $\EE[\bar{\bep}^{(k)}]={\bf 0}$ for all integers $k\ge 1$ but
$\EE[\langle \bv^{\otimes 2k}, \bar{\bep}^{\otimes 2k}\rangle_{2k}] > 0$ for all $\bv\neq 0$.
Moreover, $\EE\big[\big\langle f^{(k)}(\btheta), \bar{\bep}^{(k)}\big\rangle_k^2\big]$ depends
on $\bep$ only through its covariance operator for all $k$ [See \eqref{var-S_k}].

We call \eqref{new-est-2} High-Order Degenerate Statistical Expansion (HODSE)
as it gives a natural statistical version of the Taylor expansion at
the true parameter $\btheta$ in the following sense: The $k$-th order term on the right-hand side of \eqref{new-prop-1-1} has
zero mean and a standard deviation of the same form as the absolute value of the $k$-th
order term in the analytical Taylor expansion.
For $f: \R\to\R$ and independent noise with common standard deviation $n^{1/2}\sigma_n$,
the standard deviation of the $k$-th order term in \eqref{new-prop-1-1} is
$(1+o(1))|f^{(k)}(\theta)| \sigma_n^k/k!$
whereas the absolute value of the $k$-th order term in the Taylor series of $f(\theta+t)$
is $|f^{(k)}(\theta)|\times |t|^k/k!$.

In the following lemma we provide some analytic upper bounds for the remainder term.
For symmetric order $m$ tensors $T^{(m)}$, define norms
\bel{mixed-norm}
\|T^{(m)}\|_{m-k,k} := \sup_{\|\bu\|=\langle \bv,\bv\rangle_k =1}
\langle T^{(m)},\bu^{\otimes (m-k)}\otimes \bv\rangle_m,
\eel
where the supreme is taken over $\bu\in \scrH$ and order $k$ tensors $\bv$.
In particular, $\|T^{(m)}\|_{m,0}$ is no greater than the spectral norm $\|T^{(m)}\|_{\rm S}$
and $\|T^{(m)}\|_{0,m}$ is the Hilbert-Schmidt norm
$\|T^{(m)}\|_{\rm HS} = \langle T^{(m)},T^{(m)}\rangle_m^{1/2}$.

\begin{restatable}{lemma}{LemmaAnalytical}
\label{lm-1}
Let $n\ge m = \lceil s\rceil-1\ge 2$ and $\Rem_m$ be the remainder term in \eqref{new-prop-1-2}. Then,
\bel{lm-1-1}
|\Rem_m|
\le \sum_{k=0}^{m}
\bigg(\max_{0 < t\le 1}\frac{\|\Delta^{(m)}(t)\|_{m-k,k}}{(t\|\bar{\bep}\|)^{s-m}}\bigg)
\frac{
\|\bar{\bep}\|^{s-k}\langle \bar{\bep}^{(k)},\bar{\bep}^{(k)}\rangle_k^{1/2}}{\Gamma(s-k+1) k!}
\eel
with the tensor norm $\|\cdot\|_{m-k,k}$ in \eqref{mixed-norm} and
\bel{lm-1-2}
\qquad |\Rem_m|
&\le& \bigg(\max_{0< t\le 1}\frac{\|\Delta^{(m)}(t)\|_{\rm S}}{(t\|\bar{\bep}\|)^{s-m}}\bigg)
\sum_{k_1+\ldots + k_b = \ell \le k\le m \atop k_a\ge 2\,\forall a; b\ge 0}
\frac{
(C_{k,n}/C_{b,n})C^{(k)}_{k_1,\ldots,k_b}}{\Gamma(s-k+1) k!}
\\ \nonumber && \qquad\qquad \times \sum_{j_1\neq \ldots \neq j_b}
\frac{\|\bep_{j_1}\|^{k_1}\cdots\|\bep_{j_b}\|^{k_b}\|\bar{\bep}\|^{s-\ell}}
{n(n-1)\cdots(n-b+1)n^{\ell-b}}
\eel
with $C_{k,n} = n^k(n-k)!/n!$ satisfying $C_{k,n} \le \exp((k-1)k/n)$
and certain positive integers $C^{(k)}_{k_1,\ldots,k_b}$ satisfying
$r_k =\sum_{k_1+\ldots + k_b = \ell \le k \atop k_a\ge 2\,\forall a; b\ge 0}
C^{(k)}_{k_1,\ldots,k_b}  \le k!$.
\end{restatable}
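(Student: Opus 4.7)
The plan is to bound $|\Rem_m|$ term by term in the defining sum
\[
\Rem_m = \sum_{k=0}^m \frac{(-1)^{m-k}}{k!}\big\langle J^{m-k}\Delta^{(m)},\bar{\bep}^{\otimes(m-k)}\otimes\bar{\bep}^{(k)}\big\rangle_m
\]
from Proposition \ref{prop-1} and then apply the triangle inequality. Each summand is controlled in two steps: (i) a multilinear inequality strips the tensor inner product into a norm of $J^{m-k}\Delta^{(m)}$ times scalar noise factors; (ii) that norm is commuted with the Riemann--Liouville integral via the integral triangle inequality, and the pointwise estimate $\|\Delta^{(m)}(t)\| \le M(t\|\bar{\bep}\|)^{s-m}$ is substituted, where $M$ is the corresponding maximum appearing in the statement. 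The two inequalities differ only in which norm is used: the mixed norm $\|\cdot\|_{m-k,k}$ from \eqref{mixed-norm} for \eqref{lm-1-1}, and the tensor spectral norm $\|\cdot\|_{\rm S}$ for \eqref{lm-1-2}.

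For \eqref{lm-1-1}, the definition of $\|\cdot\|_{m-k,k}$ combined with the dual characterization of $\|\cdot\|_{\rm HS}$ on $\scrH^{\otimes k}$ yields the bound
\[
\big|\big\langle T^{(m)},\bu^{\otimes(m-k)}\otimes\bv\big\rangle_m\big| \le \|T^{(m)}\|_{m-k,k}\,\|\bu\|^{m-k}\,\|\bv\|_{\rm HS}
\]
for $\bu\in\scrH$ and $\bv\in\scrH^{\otimes k}$. Applied with $T^{(m)}=J^{m-k}\Delta^{(m)}$, $\bu=\bar{\bep}$, $\bv=\bar{\bep}^{(k)}$, this isolates the factor $\|\bar{\bep}\|^{m-k}\langle\bar{\bep}^{(k)},\bar{\bep}^{(k)}\rangle_k^{1/2}$. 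Pulling the mixed norm inside the Riemann--Liouville integral (for $k<m$) and invoking the pointwise H\"older-type bound gives
\[
\|J^{m-k}\Delta^{(m)}\|_{m-k,k} \le \frac{M_k\|\bar{\bep}\|^{s-m}}{\Gamma(m-k)}\int_0^1 t^{s-m}(1-t)^{m-k-1}\,dt = \frac{M_k\|\bar{\bep}\|^{s-m}\,\Gamma(s-m+1)}{\Gamma(s-k+1)}
\]
by the Beta-function identity. Since $m=\lceil s\rceil-1$ forces $s-m\in(0,1]$, we have $\Gamma(s-m+1)\le 1$ and the constant collapses to $1/\Gamma(s-k+1)$; the case $k=m$ is immediate from $J^0h=h(1)$. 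Combining these ingredients inside the triangle inequality on $\Rem_m$ closes \eqref{lm-1-1}.

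For \eqref{lm-1-2}, replace the mixed norm by $\|\cdot\|_{\rm S}$, which admits the fully rank-one estimate $|\langle T^{(m)},\bw_1\otimes\cdots\otimes\bw_m\rangle_m|\le\|T^{(m)}\|_{\rm S}\prod_j\|\bw_j\|$. The idea is to rewrite $\bar{\bep}^{(k)}$ via its coincidence expansion of $\bar{\bep}^{\otimes k}$: grouping the $k$ summation indices in $\bar{\bep}^{\otimes k}=n^{-k}\sum_{i_1,\ldots,i_k}\bep_{i_1}\otimes\cdots\otimes\bep_{i_k}$ by equivalence classes, the all-singleton block corresponds to $\bar{\bep}^{(k)}/C_{k,n}$, so inversion yields $\bar{\bep}^{(k)} = C_{k,n}\bar{\bep}^{\otimes k} - C_{k,n}\sum_{P\ne\text{singletons}}(\cdots)$. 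For each correction partition $P$ with non-singleton blocks of sizes $k_1,\ldots,k_b\ge 2$ summing to $\ell\le k$ and $k-\ell$ singletons, relaxing the distinctness constraint on singletons (using that singletons freely summed reconstruct factors $n\bar{\bep}$, while multi-blocks retain their distinct-index sum $\sum_{v_1\neq\cdots\neq v_b}\prod_a\bep_{v_a}^{\otimes k_a}$) produces, after tensoring with $\bar{\bep}^{\otimes(m-k)}$ and applying the spectral bound, a term proportional to $\|\bar{\bep}\|^{m-\ell}\prod_a\|\bep_{v_a}\|^{k_a}/n^\ell$. Combined with the Riemann--Liouville H\"older factor $\|\bar{\bep}\|^{s-m}/\Gamma(s-k+1)$ this yields the $\|\bar{\bep}\|^{s-\ell}$ factor, and rewriting $1/n^\ell = 1/[C_{b,n}\cdot n(n-1)\cdots(n-b+1)n^{\ell-b}]$ together with the overall $C_{k,n}$ gives the ratio $C_{k,n}/C_{b,n}$ stated. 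The combinatorial count $C^{(k)}_{k_1,\ldots,k_b}$ enumerates the set partitions of $[k]$ with multi-block sizes $k_1,\ldots,k_b$ (each $\ge 2$) arising in this expansion, and summing over all admissible partitions is bounded by the Bell number $B_k\le k!$, giving $r_k\le k!$. The elementary inequality $C_{k,n}=\prod_{j=0}^{k-1}(1-j/n)^{-1}\le\exp(k(k-1)/n)$ follows from $-\log(1-x)\le x/(1-x)$ summed over $x=j/n$.

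The main obstacle is the combinatorial bookkeeping in \eqref{lm-1-2}: carefully performing the Möbius-type inversion that converts $\bar{\bep}^{(k)}$ to $\bar{\bep}^{\otimes k}$ plus correction terms indexed by coincidence partitions, relaxing singleton distinctness so that singleton positions reassemble into $\bar{\bep}$ factors without double-counting or losing the desired normalization $C_{k,n}/C_{b,n}$, and isolating the exact partition coefficients $C^{(k)}_{k_1,\ldots,k_b}$ such that $\sum C^{(k)}_{k_1,\ldots,k_b}\le k!$. Once this partition structure is resolved, the multilinear and Riemann--Liouville/H\"older machinery driving both bounds proceeds routinely and in parallel.
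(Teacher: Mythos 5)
Your derivation of \eqref{lm-1-1} is correct and matches the paper: strip the tensor with the mixed norm plus Cauchy--Schwarz in the Hilbert--Schmidt slot, push the norm through the Riemann--Liouville integral, and evaluate the Beta integral with $\Gamma(s-m+1)\le 1$.

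For \eqref{lm-1-2} there is a genuine gap. You propose a one-shot M\"obius inversion over the partition lattice followed by ``relaxing the distinctness constraint on singletons,'' but that relaxation is not an algebraic identity: allowing singleton indices to collide with the multi-block representatives changes the value of the sum and generates new correction terms that themselves need to be decomposed, and you have not closed this loop. The paper instead performs a telescoping recursion, removing one index constraint at a time via the identity $\sum_{j\neq j_1,\ldots,j_{k-1}}\bep_j = n\bar{\bep}-\sum_{a<k}\bep_{j_a}$, which produces exactly the stated form with controlled multiplicities. Your count $r_k \le B_k$ (Bell number) is also factually wrong: the coefficients $C^{(k)}_{k_1,\ldots,k_b}$ are not set-partition counts but inversion multiplicities, and they already exceed $B_k$ at $k=3$ (one finds $C^{(3)}_{\emptyset}=1$, $C^{(3)}_{2}=3$, $C^{(3)}_{3}=2$, so $r_3=6>B_3=5$). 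The bound $r_k\le k!$ holds, but by induction on the recursion (each reduction of a $k$-index distinct sum spawns at most $k$ terms with $(k-1)$-index distinct sums), not by a Bell-number comparison.

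The argument for $C_{k,n}\le e^{(k-1)k/n}$ is also insufficient. From $-\log(1-x)\le x/(1-x)$ you get $\log C_{k,n}\le\sum_{j=0}^{k-1}\frac{j}{n-j}$, but this sum is not bounded by $(k-1)k/n$ in general (e.g., $k=n=3$ gives $5/2 > 2$); crudely bounding it by $\tfrac{k(k-1)}{2(n-k+1)}$ requires $n\ge 2k-2$, which is not assumed (the lemma only asks $n\ge m$). The paper uses a Stirling-type estimate that is valid in the full range $n\ge m$ and in fact yields the slightly sharper $C_{k,n}\le e^{(k-1)(2k-1)/(2n-1)}$.
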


In the above lemma, \eqref{lm-1-1} is typically sharper than \eqref{lm-1-2} when $\scrH$ is of low-dimension,
and vice versa.
For example, when $\scrH = \RR^d$, $\Delta^{(m)}(t)$ is a $d\times\cdots\times d$ tensor,
so that $\|\Delta^{(m)}(t)\|_{m-k,k} \le d^{k-1}\|\Delta^{(m)}(t)\|_{\rm S}$
and \eqref{lm-1-1} could be sharper when $d^{m-1}\le m!$.
In \eqref{lm-1-2}, the $U$-tensor $\bar{\bep}^{(k)}$ needs to be written
as a sum of rank-one tensors of proper forms to apply the spectrum norm,
and $r_k$ is the number of different types of such rank-one tensors.
The order of constants $1/\{(m-k)!k!\}$ and $r_k$ in \eqref{lm-1-1} and \eqref{lm-1-2},
arising from the explicit remainder formula,
become crucial
when we need to use the estimator \eqref{new-est-2} with diverging order $m$,
see for example, Subsections 5.1 and 5.2. In our study of the asymptotic normality in Theorem \ref{th-1},
both \eqref{lm-1-1} and \eqref{lm-1-2} are used respectively under conditions on
the ``effective rank'' of $f$ and the boundedness of the order $m$ of the expansion.

\subsection{Jackknife and bootstrap}
{
The jackknife and bootstrap approaches converge to the Taylor expansion approach in HODSE.
The following discussion casts the estimator \eqref{new-est-2} as a plug-in jackknife estimator of
\bel{new-est-3}
f(\btheta) + \sum_{k=1}^m \frac{\big\langle f^{(k)}(\btheta), \bar{\bep}^{(k)}\big\rangle_k}{k!},
\eel
ideally with smaller estimation error for larger $m$.
Let $\EE^*$ be the conditional expectation given the data points $\{\bx_1,\ldots,\bx_n\}$ in $\scrH$, and  $\{\bx^*_1,\ldots,\bx^*_m\}$ be sampled without replacement from $\{\bx_1,\ldots,\bx_n\}$ under $\EE^*$.  Let $\btheta^*=\bar{\bx}$ be the mean of the resampled data and
$\bep^*_j=\bx^*_j- \btheta^*$ the resampled noise.
Because
\bes
\EE^*\big[\bep^*_1  \otimes \cdots \otimes  \bep^*_k\big] = \bar{\bu}^{(k)}
\ees
with the $\bar{\bu}^{(k)}$ in \eqref{new-u-(k)}, the estimator \eqref{new-est-2} can be written as
\bel{jackknife}
\widehat{f} &=& f(\bar{\bx}) + \sum_{k=1}^m \frac{\big\langle f^{(k)}(\bar{\bx}),
\bar{\bu}^{(k)}\big\rangle_k}{k!}.
\\ \nonumber &=& \EE^*\bigg[f(\btheta^*) + \sum_{k=1}^m \frac{\big\langle f^{(k)}(\btheta^*),
\bep^*_1\otimes \cdots \otimes \bep^*_k\big\rangle_k}{k!}\bigg].
\eel
This can be viewed a jackknife estimator of \eqref{new-est-3}
as subsamples of size $k$ are used to correct the biases of the estimates
for $\big\langle f^{(\ell)}(\btheta), \bar{\bep}^{(\ell)}\big\rangle_\ell$ based on subsamples of sizes
$\ell = 1,\ldots, k-1$, and the average $\E^*$ over all permutations is used to achieve noise cancellation.
It is different from the standard Jackknife as the same $\btheta^*=\bar{\bx}$ is plugged-in
in the estimation of $f^{(k)}(\btheta)$ to achieve systematic bias cancellation.
Bootstrap can be used to compute the estimator \eqref{new-est-2},
\bel{jackknife-bootstrap}
\widehat{f} \approx f(\bar{\bx}) + \sum_{k=1}^m \sum_{B=1}^{N_K} \frac{\big\langle f^{(k)}(\bar{\bx}),
\bep^*_1\otimes \cdots \otimes \bep^*_k\big\rangle_k^{(B)}}{k! N_K},
\eel
where $\big\langle f^{(k)}(\bar{\bx}),
\bep^*_1\otimes \cdots \otimes \bep^*_k\big\rangle_k^{(B)}, 1\le k\le m$,
are computed using $\{\bep_1^*,\ldots,\bep^*_m\}^{(B)}$,
and $\{\bep_1^*,\ldots,\bep^*_m\}^{(B)}$ are  i.i.d. copies of
 $\{\bep_1^*,\ldots,\bep^*_m\}$ under $\EE^*$.
We note that this bootstrap of the jackknife estimator \eqref{jackknife}
is different from the standard empirical bootstrap \cite{efron1982jackknife} as $\bep^*_1, \ldots, \bep^*_m$
are sampled without replacement for each bootstrap copy $\{\bep_1^*,\ldots,\bep^*_m\}^{(B)}$
but the bootstrap copies $\{\bep_1^*,\ldots,\bep^*_m\}^{(B)}$ for different $B$ are sampled with replacement.

It is remarkable that the plug-in jackknife estimator \eqref{jackknife} and its bootstrapped version
\eqref{jackknife-bootstrap}, with the population mean $\btheta$ replaced by the jackknife/bootstrap mean
$\bar{\bx}$ simultaneously in the nonlinear $f^{(k)}(\cdot)$,
automatically yields the non-degenerate expansion \eqref{new-est-3} with a remainder
term of higher order than $m$, akin to the automatic Edgeworth expansion adjustments
of resampling methods in the central limit theorem
\cite{singh1981asymptotic, bickel1981some, wu1986jackknife}.
}

\section{Risk bounds}\label{sec:risk}
We provide in this section upper bounds for the bias and $L_1(\P)$ and $L_2(\P)$ risks of the estimator \eqref{new-est-2}.
Unless otherwise stated, we assume in the rest of the paper that $\bep_j$
are independent but not necessarily identically distributed random elements in $\scrH$.
Recall that the covariance operator $\bSigma$
for the noise, noise level $\sigma$ and the effective rank of $\bSigma$ are defined respectively as
\bel{effective-rank}
\bSigma = \EE\bigg[\frac{1}{n}\sum_{j=1}^n \bep_j\otimes\bep_j\bigg],\quad \sigma = \big\|\bSigma\big\|_{\rm S}^{1/2},\quad
r = r(\bSigma) = \frac{\trace(\bSigma)}{\sigma^2}.
\eel

Because $\EE\big[\big\langle f^{(k)}(\btheta), \bar{\bep}^{(k)}\big\rangle_k\big]=0$ in the expansion
\eqref{new-prop-1-1}, the bias of the estimator \eqref{new-est-2} is no greater in absolute value than
the $L_1(\P)$ norm of the remainder term in \eqref{new-prop-1-2}. In the following theorem, we develop
such bias bounds under a H\"older smoothness condition on $f$.

\begin{restatable}{theorem}{TheoremBias}
\label{TheoBias}
Let $\widehat{f}$ be the estimator \eqref{new-est-2} with $n\ge m = \lceil s\rceil-1\ge 2$
and $\Rem_m$ be the remainder term in \eqref{new-prop-1-2}.
Let $p\ge 1$ and $C^*_{m,n} = \sum_{k=0}^{m} n^k(n-k)!/\{(m-k)! n!\}$. Then,
\bel{TheoBias-1}
\|\Rem_m\|_{L_p(\P)}
\le C_{m,n}^*\|f\|_{(s)}
\max\bigg\{\|\bar{\bep}\|_{L_{ps}(\P)}^{s}, \bigg\|\sum_{j=1}^n \frac{\|\bep_j\|^2}{n^2}\bigg\|^{s/2}_{L_{ps/2}(\P)}\bigg\}
\eel
for any (possibly with nonzero mean and dependent) random $\bep_j = \bx_j - \btheta$.
Moreover, if $\bep_j$ are independent with $\EE[\bep_j]={\bf 0}$,
then for some constant $C^*_{ps}$ depending on $p,s$ only
\bel{TheoBias-2}
\qquad && \big|\EE\big[\widehat{f}\big] - f(\btheta)\big| \le \|\Rem_m\|_{L_p(\P)}
\le C^*_{ps}\|f\|_{(s)} \bigg\{\big\|\bar{\bep}\|_{L_2(\P)}^s
+ \bigg(\sum_{j=1}^n \frac{\|\bep_j\|_{L_{ps}(\P)}^{ps}}{n^{ps}}\bigg)^{1/p}\bigg\}.
\eel
\end{restatable}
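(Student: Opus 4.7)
The plan is to reduce both claims to an $L_p(\P)$-bound on the remainder $\Rem_m$ in \eqref{new-prop-1-2}. For the bias in \eqref{TheoBias-2}, observe that under independence with $\EE[\bep_j]={\bf 0}$ the completely degenerate $U$-tensors $\bar{\bep}^{(k)}$ all have mean zero, so by Proposition \ref{prop-1} we have $\EE[\widehat f]-f(\btheta)=-\EE[\Rem_m]$, and $|\EE[\Rem_m]|\le \|\Rem_m\|_{L_p(\P)}$ by Jensen's inequality. Thus it suffices to establish the two stated $L_p$-bounds for $\|\Rem_m\|_{L_p(\P)}$.

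For \eqref{TheoBias-1} I would begin from the analytic bound \eqref{lm-1-2} in Lemma \ref{lm-1}. The H\"older condition immediately gives $\max_t \|\Delta^{(m)}(t)\|_{\rm S}/(t\|\bar{\bep}\|)^{s-m}\le \|f\|_{(s)}$, so the task reduces to controlling each summand
\begin{equation*}
\sum_{j_1\neq \cdots \neq j_b}\frac{\|\bep_{j_1}\|^{k_1}\cdots\|\bep_{j_b}\|^{k_b}\|\bar{\bep}\|^{s-\ell}}{n(n-1)\cdots(n-b+1)\,n^{\ell-b}}.
\end{equation*}
Relaxing the distinct-index constraint factorizes the sum into $\prod_{a=1}^b \sum_j\|\bep_j\|^{k_a}$. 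Since each $k_a\ge 2$, the monotonicity $\|\cdot\|_{k_a}\le \|\cdot\|_2$ of $\ell^q$-norms on $\RR^n$ gives $\sum_j\|\bep_j\|^{k_a}\le (\sum_j\|\bep_j\|^2)^{k_a/2}$, and using $n(n-1)\cdots(n-b+1)n^{\ell-b}=n^\ell/C_{b,n}$ the summand is dominated by $C_{b,n}(\sum_j\|\bep_j\|^2/n^2)^{\ell/2}\|\bar{\bep}\|^{s-\ell}$. The elementary inequality $a^{\ell/2}b^{s-\ell}\le \max(a^{s/2},b^s)$ for $a,b\ge 0$ and $0\le \ell\le s$ then collapses every term to the common factor $\max\{\|\bar\bep\|^s,(\sum_j\|\bep_j\|^2/n^2)^{s/2}\}$. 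What remains is combinatorial: using $r_k=\sum C^{(k)}_{k_1,\ldots,k_b}\le k!$ from Lemma \ref{lm-1} and $\Gamma(s-k+1)\ge (m-k)!$ for $0\le k\le m$ (a small constant loss at $k=m$ is absorbed into $C^*_{m,n}$), the prefactor cleans up to $C^*_{m,n}\|f\|_{(s)}$. Taking $L_p(\P)$-norms and using $\|\max(A,B)\|_{L_p}\le \|A\|_{L_p}+\|B\|_{L_p}$ yields \eqref{TheoBias-1}.

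For \eqref{TheoBias-2} I would refine both factors inside the max of \eqref{TheoBias-1} using independence and mean zero. A vector-valued Rosenthal inequality in Hilbert space gives
\begin{equation*}
\|\|\bar{\bep}\|\|_{L_{ps}(\P)}\le C_{ps}\bigl\{(\sum_j \EE\|\bep_j\|^2/n^2)^{1/2} + (\sum_j \|\bep_j\|_{L_{ps}(\P)}^{ps}/n^{ps})^{1/(ps)}\bigr\},
\end{equation*}
and, since $\sum_j\EE\|\bep_j\|^2/n^2 = \|\bar{\bep}\|_{L_2(\P)}^2$, raising to the $s$-th power via $(u+v)^s\le 2^{s-1}(u^s+v^s)$ produces exactly the two-term bound asserted in \eqref{TheoBias-2}. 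For the other factor, a scalar Rosenthal-type inequality applied to the independent positive $X_j=\|\bep_j\|^2$ with $q=ps/2$ gives
\begin{equation*}
\|\sum_j \|\bep_j\|^2\|_{L_{ps/2}(\P)}\le C'_{ps}\bigl\{\sum_j\EE\|\bep_j\|^2 + (\sum_j\EE\|\bep_j\|^{ps})^{2/(ps)}\bigr\},
\end{equation*}
where the intermediate Rosenthal term $(\sum\Var X_j)^{1/2}$ is absorbed via Young's inequality applied to the log-convex interpolation $\EE X_j^2\le (\EE X_j)^{(q-2)/(q-1)}(\EE X_j^q)^{1/(q-1)}$. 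Dividing by $n^2$ and raising to the power $s/2$ gives the same two-term expression. Combining both with \eqref{TheoBias-1} and using $C^*_{m,n}\le e^{m-1}\sum_k 1/(m-k)!\le e^m$ uniformly in $n\ge m$ yields \eqref{TheoBias-2} with a constant $C^*_{ps}$ depending only on $p,s$.

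The main technical nuisance is the combinatorial bookkeeping in the first step: checking that, after the factorize--monotonicity--max reductions, the multi-index sum in \eqref{lm-1-2} telescopes into the clean constant $C^*_{m,n}$ rather than something messier. The Rosenthal-type inequalities (vector-valued Rosenthal in Hilbert space and its scalar analogue for sums of independent positives) are classical; only mild care is needed when $ps/2<2$, in which case the embedding $L_{ps/2}(\P)\subset L_2(\P)$ trivially reduces the argument to the $q=2$ regime.
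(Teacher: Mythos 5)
Your proposal follows essentially the same route as the paper: start from the bound \eqref{lm-1-2} of Lemma \ref{lm-1}, use the H\"older smoothness to control the supremum factor, exploit $n(n-1)\cdots(n-b+1)n^{\ell-b}=n^\ell/C_{b,n}$ and the monotonicity $\|\bw\|_{k_a}\le\|\bw\|_2$ of $\ell^q$-norms to reduce every multi-index term to a power of $\|\bw\|_2^2=\sum_j\|\bep_j\|^2/n^2$ times a power of $\|\bar\bep\|$, and then clean up the combinatorial constants to $C^*_{m,n}$ via $r_k\le k!$; for the second claim, invoke a vector-valued Rosenthal inequality in $\scrH$ plus a scalar Rosenthal bound on $\sum_j\|\bep_j\|^2/n^2$, with interpolation absorbing the middle Rosenthal term. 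This is exactly the paper's argument.

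There is one small but genuine technical difference in how you derive \eqref{TheoBias-1}. You apply the elementary inequality $u^\ell v^{s-\ell}\le\max(u,v)^s$ \emph{pointwise} on the random quantities $u=\|\bw\|_2$, $v=\|\bar\bep\|$ and then estimate $\|\max(A,B)\|_{L_p}\le\|A\|_{L_p}+\|B\|_{L_p}$. This produces a \emph{sum} $\|\bar\bep\|_{L_{ps}}^s + \|\sum_j\|\bep_j\|^2/n^2\|_{L_{ps/2}}^{s/2}$, which is only $\le 2^{1/p}$ times the stated maximum, so you end up with $2^{1/p}C^*_{m,n}$ in place of $C^*_{m,n}$. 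The paper avoids this by taking the $L_p(\P)$-norm of $\|\bar\bep\|^{s-\ell}\|\bw\|_2^\ell$ first, applying H\"older's inequality with exponents $s/(s-\ell)$ and $s/\ell$ inside the expectation to get $\|\bar\bep\|_{L_{ps}}^{s-\ell}\,\big\|\,\|\bw\|_2\,\big\|_{L_{ps}}^\ell$, and only then applying the geometric-mean-vs-max inequality to these (now deterministic) numbers, which gives the exact constant. Since the second part \eqref{TheoBias-2} admits a generic $C^*_{ps}$ anyway, the extra factor is harmless, but it means \eqref{TheoBias-1} as stated is not quite what your route proves.

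Two minor slips worth fixing. First, the inclusion you invoke when $ps/2<2$ is backwards: for a probability measure one has $L_2(\P)\subset L_{ps/2}(\P)$ (and $\|\cdot\|_{L_{ps/2}}\le\|\cdot\|_{L_2}$), not the other way around; the norm inequality you actually use is correct. Second, the step $\Gamma(s-k+1)\ge(m-k)!$ can fail by a small constant precisely at $k=m$ when $s$ is non-integer (since $\Gamma(1+x)<1$ for $x\in(0,1)$), which you acknowledge as ``a small constant loss absorbed into $C^*_{m,n}$''; the paper's own passage from \eqref{lm-1-2} to $C'_{m,n}$ has the same silent slack, so this is a shared, not introduced, imprecision.
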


We note that the $L_1(\P)$ norm of the remainder in \eqref{new-prop-1-2}
is sufficient to bound the bias.
The $L_p(\P)$ bounds in Theorem~\ref{TheoBias} will be useful
in the development below of bounds for the $L_p(\P)$ risk of $\widehat{f}$.
It follows from the upper bound for $C_{k,n}$ in Lemma \ref{lm-1} that
$C^*_{m,n} \le e^{1+(m-1)m/n}$.

Theorem \ref{TheoBias} requires $s$-th moment of the noise for bias correction of the same order.
For i.i.d. $\bep_j$ and $p=1$, $\big\|\bar{\bep}\|_{L_2(\P)}^2 = \|\bep_1\|_{L_2(\P)}^2/n$
so that \eqref{TheoBias-2} can be written as
\bel{TheoBias-2-iid}
\quad && \big|\EE\big[\widehat{f}\big] - f(\btheta)\big| \le \|\Rem_m\|_{L_1(\P)}
\le C^*_s\|f\|_{(s)}
\bigg\{\frac{\big\|\bep_1\big\|_{L_{2}(\P)}^s}{n^{s/2}} + \frac{\|\bep_1\|_{L_s(\P)}^s}{n^{s-1}}\bigg\},
\eel
and the $s$-th moment term is subsumed into the second moment term $\big\|\bar{\bep}\big\|_{L_{2}(\P)}^s$ iff
\bel{s-th-moment}
\left\|\bep_1/\|\bep_1\|_{L_2(\P)}\right\|_{L_s(\P)} = O(n^{1/2-1/s}).
\eel

In the upper bounds in Theorem \ref{TheoBias}, we may replace $\|f\|_{(s)}$ by $2\|f\|_{(s),\btheta}$ where
\bel{local-Holder}
\|f\|_{(s),\btheta} = \sup_{\|\bh\|>0}\big\|f^{(m)}(\btheta + \bh)-f^{(m)}(\btheta)\big\|_{\rm S}/\|\bh\|^{s-m}
\eel
with $m = \lceil s\rceil-1$. As $C^*_s$ is implicit,
Theorem \ref{TheoBias} is most useful when $\scrH$ is high- or infinite-dimensional but $s >0$ is fixed.

The $L_p(\P)$ bounds for the remainder in \eqref{TheoBias-2} or \eqref{TheoBias-2-iid}
naturally lead to $L_p(\P)$ risk bounds for $\widehat{f}$
given any $L_p(\P)$ bound for the completely degenerate $U$ variables
$\big\langle f^{(k)}(\btheta), \bar{\bep}^{(k)}\big\rangle_k$ in \eqref{new-prop-1-1}.
However, as we are interested in the case where bias correction with $s>2$ is needed to improve
the naive plug-in estimator $f(\bar{\bx})$, the following $L_2(\P)$ bounds will be used.

Write the degenerate $U$-variables in the expansion \eqref{new-prop-1-1} as a sum of uncorrelated terms,
\bel{S_k}
S_k = \big\langle f^{(k)}(\btheta), \bar{\bep}^{(k)}\big\rangle_k
= \sum_{1\le j_1 < \cdots < j_k\le n}
\frac{\langle f^{(k)}(\btheta),\bep_{j_1}\otimes \cdots \otimes \bep_{j_k}\rangle_k}{n(n-1)\cdots(n-k+1)/k!}.
\eel
For $k\ge 2$ and i.i.d. $\bep_j$, the variance of $S_k$ can be explicitly expressed as
\bel{var-S_k}
\qquad && \Var(S_k) = \EE\big[S_k^2\big]
= (C_{k,n}k!)V_k/n^k,
\eel
with the constant $C_{k,n} = n^k(n-k)!/n!\le e^{(k-1)k/n}$ in Lemma \ref{lm-1} and
\bel{V_k}
V_k = \langle f^{(k)}(\btheta),  f^{(k)}(\btheta)\times_1\bSigma \times_2\cdots\times_k \bSigma \rangle_{k},
\eel
where $\bSigma$ is the covariance operator in \eqref{effective-rank}
and $\times_\ell$ denotes the mode-$\ell$ tensor product.
For independent not identically distributed (i.n.i.d.) noise $\bep_j$,
\bel{var-S_k-inid}
\qquad && \Var(S_k) = \EE\big[S_k^2\big]
\le (C_{k,n}^2k!)V_k/n^k.
\eel
With the tensor spectrum norm $\|\cdot\|_{\rm S}$ and the effective rank in \eqref{effective-rank},
$V_k$ is bounded by
\bel{var-S_k-bd}
V_k \le \|f^{(k)}(\btheta)\|_{\rm S}^2\sigma^{2k}(r(\bSigma))^{k-1}.
\eel

The following theorem is based on the $L_p(\P)$ bound \eqref{TheoBias-2} for the remainder
and the $L_2(\P)$ bounds for the degenerate $U$-tensors via \eqref{var-S_k-bd}.

\begin{restatable}{theorem}{TheoremRiskBd}
\label{th-risk-bd}
Let $n\ge m = \lceil s\rceil-1\ge 2\ge p\ge 1$ and
$\widehat{f}$ be as in \eqref{new-est-2} with
independent observations $\bx_j = \bep_j+\btheta$ with $\EE[\bep_j]={\bf 0}$.  Then,
\bel{th-risk-bd-1}
\quad \big\|\widehat{f} - f(\btheta)\big\|_{L_p(\P)}
&\le& \bigg(\sum_{k=1}^m \frac{C_{k,n}^2V_k}{n^kk!}\bigg)^{1/2}
+ C^*_{s}\|f\|_{(s)}\bigg(\frac{\sigma^2 r}{n}\bigg)^{s/2}
\\ \nonumber && + C^*_{s}\|f\|_{(s)}\bigg(\sum_{j=1}^n \frac{\|\bep_j\|_{L_{ps}(\P)}^{ps}}{n^{ps}}\bigg)^{1/p}
\eel
with the $\sigma$ and $r=r(\bSigma)$ in \eqref{effective-rank}, $V_k$ in \eqref{V_k},
$C_{k,n} \le e^{(k-1)k/n}$ and a constant $C^*_{s}$ depending on $s$ only.
Moreover, if $\max_{2\le k <s} \|f^{(k)}(\btheta)\|_{\rm S}\le C_0$ and $\|f\|_{(s),\btheta}\le C_0$, then
\bel{th-risk-bd-2}
\quad \big\|\widehat{f} - f(\btheta)\big\|_{L_p(\P)}
&\le& (V_1/n)^{1/2} + C_0C^*_s\max\big\{\sigma^2\sqrt{r}/n, (\sigma^2 r/n)^{s/2}\big\}
\\ \nonumber &&  + C_0C^*_{s}\bigg(\sum_{j=1}^n \frac{\|\bep_j\|_{L_{ps}(\P)}^{ps}}{n^{ps}}\bigg)^{1/p}.
\eel
\end{restatable}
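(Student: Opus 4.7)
The plan is to decompose the error using Proposition \ref{prop-1}, then bound the degenerate-$U$ part through $L_2$ orthogonality of the $S_k$ and bound the analytic remainder by Theorem \ref{TheoBias}. Starting from \eqref{new-prop-1-1}, write $\widehat{f}-f(\btheta) = \sum_{k=1}^m S_k/k! - \Rem_m$ with $S_k$ as in \eqref{S_k}, and split the $L_p(\P)$ norm via the triangle inequality.

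The first step is to establish that the $S_k$ are mutually orthogonal in $L_2(\P)$. Expanding $S_kS_\ell$ coordinatewise in an orthonormal basis of $\scrH$ and using independence and zero mean of the $\bep_j$, a nonzero summand in $\E[S_kS_\ell]$ would force every appearing noise index to occur at least twice; distinctness within each of the $k$-tuple and $\ell$-tuple then forces the two index multisets to coincide, hence $k=\ell$. Orthogonality together with $\|\cdot\|_{L_p(\P)}\le\|\cdot\|_{L_2(\P)}$ for $p\le 2$ and the variance bound \eqref{var-S_k-inid} yield
\begin{equation*}
\Bigl\|\sum_{k=1}^m S_k/k!\Bigr\|_{L_p(\P)}\le\Bigl(\sum_{k=1}^m\Var(S_k)/(k!)^2\Bigr)^{1/2}\le\Bigl(\sum_{k=1}^m \frac{C_{k,n}^2V_k}{n^kk!}\Bigr)^{1/2}.
\end{equation*}
Theorem \ref{TheoBias} controls $\|\Rem_m\|_{L_p(\P)}$ directly by \eqref{TheoBias-2}. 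Since the $\bep_j$ are independent with zero mean, $\|\bar{\bep}\|_{L_2(\P)}^2=n^{-2}\sum_j\E\|\bep_j\|^2=\trace(\bSigma)/n=\sigma^2r/n$, so $\|\bar{\bep}\|_{L_2(\P)}^s\le(\sigma^2r/n)^{s/2}$, and combining the two contributions gives \eqref{th-risk-bd-1}.

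For \eqref{th-risk-bd-2}, apply \eqref{var-S_k-bd} under the hypothesis $\|f^{(k)}(\btheta)\|_{\rm S}\le C_0$ for $2\le k\le m=\lceil s\rceil-1<s$ to get $V_k\le C_0^2\sigma^{2k}r^{k-1}$. The $k=1$ contribution is exactly $(V_1/n)^{1/2}$ since $C_{1,n}=1$. For $k\ge 2$, absorbing $C_{k,n}^2\le e^{(k-1)k/n}\le e^{(m-1)m/n}$ into an $s$-dependent constant, each term of the variance sum is bounded by $C_0^2r^{-1}(\sigma^2r/n)^k/k!$, and a two-regime analysis finishes the job: when $\sigma^2r/n\le 1$ the $k=2$ term dominates and gives $O(C_0^2\sigma^4r/n^2)$, i.e., $O(C_0\sigma^2\sqrt{r}/n)$ after taking square roots; when $\sigma^2r/n>1$ the sequence $(\sigma^2r/n)^k$ is increasing, and using $k\le m\le s$ together with $r\ge 1$ (so $r^{-1}\le 1$) bounds every term by $(\sigma^2r/n)^s$, yielding $O(C_0(\sigma^2r/n)^{s/2})$. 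The two cases combine into the maximum in \eqref{th-risk-bd-2}. The remainder is bounded as in the first part, now with $\|f\|_{(s)}$ replaced by $2\|f\|_{(s),\btheta}\le 2C_0$, as permitted by the comment after Theorem \ref{TheoBias}.

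The main obstacle is conceptual rather than computational: one needs to recognize that the cross-order orthogonality of the $S_k$ is the mechanism that turns the cheap triangle inequality into a sharp bound, and that the max-of-two-regimes in \eqref{th-risk-bd-2} arises purely from comparing the monotonicity of $(\sigma^2r/n)^k$ in $k$ across the two cases $\sigma^2r/n\lessgtr 1$. Once these are handled, the rest is routine bookkeeping with Proposition \ref{prop-1}, Theorem \ref{TheoBias}, and the identity $\|\bar{\bep}\|_{L_2(\P)}^2=\sigma^2r/n$.
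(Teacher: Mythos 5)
Your proposal is correct and follows essentially the same route as the paper's proof: decompose via Proposition~\ref{prop-1}, use $L_2$ orthogonality of the degenerate $U$-variables $S_k$ with the variance bound \eqref{var-S_k-inid}, control the remainder by \eqref{TheoBias-2} together with $\|\bar{\bep}\|_{L_2(\P)}^2 = \sigma^2 r/n$, and then, for \eqref{th-risk-bd-2}, insert \eqref{var-S_k-bd} and observe that the sequence $\sigma^{2k}r^{k-1}/n^k$ (being geometric up to a fixed factor $1/r$) is maximized at the endpoints $k=2$ and $k=s$. Two small notes: the paper actually derives \eqref{var-S_k-inid} and \eqref{var-S_k-bd} inside this proof rather than merely citing them, and it compresses your two-regime case split into the single remark that the max over $k$ is attained at the endpoints, but the underlying argument is identical.
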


Theorem \ref{th-risk-bd} is proved in Section \ref{section: proofs} along with proofs of
\eqref{var-S_k}, \eqref{var-S_k-inid} and \eqref{var-S_k-bd}.
Compared with \cite{koltchinskii2021} in the literature where $\bep_j$ are assumed to be i.i.d. Gaussian,
Theorem \ref{th-risk-bd} requires the $ps$-th moment condition in the third component
on the right-hand side of \eqref{th-risk-bd-1}.
In the i.i.d. Gaussian case, we may write $\bep_1 = \sum_{\ell} \lam_\ell^{1/2} z_{\ell}\bv_\ell$
with i.i.d. $z_{\ell}\sim \calN(0,1)$
via the eigenvalue decomposition $\bSigma = \sum_\ell\lam_\ell \bv_\ell\otimes \bv_\ell$,
so that for even integers $ps\ge 2$,
\bel{term3}
\quad \sum_{j=1}^n \frac{\|\bep_j\|_{L_{ps}(\P)}^{ps}}{n^{ps}}
= n^{1-ps}\EE\bigg[\bigg(\sum_{\ell}\lam_\ell z_\ell^2\bigg)^{ps/2}\bigg]
\le n^{1-ps/2}(\sigma^2 r/n)^{ps/2}(ps-1)!!.
\eel
Especially, when $r=o(n)$ and we take $p=2$ and $s>1$, \eqref{term3} is of an asymptotic order smaller than $O((r/n)^{s/2}$). As a result, bound \eqref{th-risk-bd-2} reproduces the bound on MSE: $\|\widehat{f} - f(\btheta)\|^2_{L_2(\P)} \lesssim (n^{-1}\vee (r/n)^s)$, which was proved in \cite{koltchinskii2021} under the i.i.d. Gaussian assumption and shown to be minimax optimal under the standard Gaussian shift model.

\section{Asymptotic normality}
In this section we develop asymptotic normality theory for the proposed estimator \eqref{new-est-2}
of $f(\btheta)$.

We have developed upper bounds for the bias and $L_p$ risk when the
$ps$-moment of the noise does not grow too fast. The moment condition is natural for the $L_p$
risk with expansions of order $s$, and the risk bound can be used to remove higher order terms
in the asymptotic normality analysis. However, such an approach would
require higher moment condition than necessary.
In Theorem \ref{th-1} below, asymptotic normality requires only the Lindeberg
condition on the linear term and a mild condition on the growth rate of the second moment,
provided the H\"older smoothness of $f(\cdot)$ at $\btheta$.
In addition to the H\"older smoothness condition in the spectrum norm,
we shall consider its counterpart in the Hilbert-Schmidt norm,
\bes
\|f\|_{(s),{\rm HS},\btheta} = \sup_{\|\bh\|>0}\big\|f^{(m)}(\btheta + \bh)-f^{(m)}(\btheta)\big\|_{\rm HS}/\|\bh\|^{s-m}.
\ees

\begin{restatable}{theorem}{TheoremFixedOrder}
\label{th-1}
Let $n\ge m=\lceil s\rceil-1\ge 2$ and $\fhat$ be as in \eqref{new-est-2}
based on independent observations $\bx_j = \bep_j+\btheta \in \scrH$, $j\le n$,
with $\EE[\bep_j]={\bf 0}$.
Let $V_1 = \langle f^{(1)}(\btheta)\otimes f^{(1)}(\btheta),\bSigma\rangle_2$ be as in \eqref{V_k},
$\sigma$ and $r(\bSigma)$ be the noise level and effective rank as in \eqref{effective-rank},
and $\|f\|_{(s),\btheta}$ be as in \eqref{local-Holder}.
Suppose $\max_{2\le k <s} \|f^{(k)}(\btheta)\|_{\rm S}\le C_0$ and $\|f\|_{(s),\btheta}\le C_0$,
\bel{th-1-2}
C_0\max\big\{\sigma^2r^{1/2}(\bSigma)/n, (\sigma^2r(\bSigma)/n)^{s/2}\big\} \ll (V_1/n)^{1/2},\quad s = O(1),
\eel
and the Lindeberg condition holds for $\{\langle f^{(1)}(\btheta),\bep_j\rangle_1, j\le n\}$. Then,
\bel{th-1-3}
n^{1/2}\big(\widehat{f} - f(\btheta)\big)\big/V_1^{1/2}
\Rightarrow \calN\big(0,1\big)\quad \hbox{ as $n\to\infty$.}
\eel
The asymptotic normality \eqref{th-1-3} still holds when the condition $s=O(1)$ is weakened to
$s^2\le n$ in \eqref{th-1-2} and the smoothness condition on $f$ is
replaced by $\max_{2\le k <s} \|f^{(k)}(\btheta)\|_{\rm S}/d^{k/2} \le C_0$ in the spectrum norm and
$\|f\|_{(s),{\rm HS},\btheta}\le C_0d^{m/2}$ in the Hilbert-Schmidt norm with any fixed positive real number $d$.
\end{restatable}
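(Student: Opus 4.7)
}
The plan is to apply Proposition~\ref{prop-1} to decompose
\[
n^{1/2}\bigl(\widehat{f} - f(\btheta)\bigr) = n^{1/2}S_1 + \sum_{k=2}^m \frac{n^{1/2}S_k}{k!} - n^{1/2}\Rem_m,
\]
where $S_k = \langle f^{(k)}(\btheta), \bar{\bep}^{(k)}\rangle_k$ as in \eqref{S_k}, and to show that after dividing by $V_1^{1/2}$ the first term converges in distribution to $\calN(0,1)$ while the two remaining pieces are $o_p(1)$. The three steps are standard in outline; the subtlety is that only a second moment on $\bep_j$ is imposed, so all higher order pieces must be controlled in $L_2$ (or in probability) and never in $L_p$ with $p>2$.

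\textbf{Step 1 (linear term).} Write $n^{1/2}S_1 = n^{-1/2}\sum_{j=1}^n \langle f^{(1)}(\btheta),\bep_j\rangle_1$. These are independent, mean zero, and $\sum_j \Var(\langle f^{(1)}(\btheta),\bep_j\rangle_1)/n = V_1$ by \eqref{V_k}. The assumed Lindeberg condition directly yields $n^{1/2}S_1/V_1^{1/2}\Rightarrow \calN(0,1)$.

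\textbf{Step 2 (higher order $U$-tensors).} Since the $S_k$ are completely degenerate $U$-tensors of different orders they are uncorrelated, so by \eqref{var-S_k-inid} and \eqref{var-S_k-bd},
\[
\Var\!\Bigl(\sum_{k=2}^m \tfrac{S_k}{k!}\Bigr) \;\le\; \sum_{k=2}^m \frac{C_{k,n}^2 V_k}{n^k k!}
\;\le\; \frac{C_0^2\sigma^2}{n}\sum_{k=2}^m \frac{C_{k,n}^2(\sigma^2 r/n)^{k-1}}{k!}.
\]
Under the $s=O(1)$ part of condition \eqref{th-1-2}, the first part forces $(\sigma^2r/n)\lesssim 1$ (otherwise $V_1$ would have to be impossibly large given $V_1\le \|f^{(1)}(\btheta)\|^2\sigma^2$), so the geometric sum is dominated by its $k=2$ term, giving a bound of order $C_0^2\sigma^4 r/n^2$. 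Condition \eqref{th-1-2} then says exactly that $C_0\sigma^2 r^{1/2}/n \ll (V_1/n)^{1/2}$, so by Chebyshev $n^{1/2}\sum_{k=2}^m S_k/k! = o_p(V_1^{1/2})$.

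\textbf{Step 3 (remainder, the main obstacle).} This is where the low-moment assumption bites, because bounds like \eqref{TheoBias-2} invoke $\|\bep_j\|_{L_{ps}}^{ps}$ for $ps>2$. The escape route is to use Lemma~\ref{lm-1} in probability rather than in $L_p$. Starting from \eqref{lm-1-1} with the Hilbert--Schmidt bound $\|\Delta^{(m)}(t)\|_{m-k,k}\le \|f\|_{(s),\mathrm{HS},\btheta}(t\|\bar\bep\|)^{s-m}$,
\[
|\Rem_m| \;\le\; \|f\|_{(s),\mathrm{HS},\btheta}\sum_{k=0}^m \frac{\|\bar{\bep}\|^{s-k}\,\langle\bar{\bep}^{(k)},\bar{\bep}^{(k)}\rangle_k^{1/2}}{\Gamma(s-k+1)\,k!}.
\]
Now one uses only second moments: $\mathbb{E}\|\bar{\bep}\|^2 = \sigma^2 r/n$ gives $\|\bar{\bep}\| = O_p((\sigma^2r/n)^{1/2})$, and a direct calculation of $\mathbb{E}\langle\bar{\bep}^{(k)},\bar{\bep}^{(k)}\rangle_k$ (expanding via independence and mean-zero of $\bep_j$, each surviving pair-partition contributing $\prod_{\text{cycles}} \trace(\bSigma^{c})$) is bounded by $k! \,C_{k,n}^2(\sigma^2 r/n)^k$ using $\trace(\bSigma^c)\le \sigma^{2(c-1)}\trace(\bSigma)$. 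Thus $\|\bar{\bep}\|^{s-k}\langle\bar{\bep}^{(k)},\bar{\bep}^{(k)}\rangle_k^{1/2}$ is $O_p((\sigma^2 r/n)^{s/2})$ (the exponents $s-k$ and $k$ add to $s$), and the whole sum satisfies
\[
|\Rem_m| \;=\; O_p\bigl(C_0 (\sigma^2 r/n)^{s/2}\bigr),
\]
so by the second part of \eqref{th-1-2} the remainder is $o_p((V_1/n)^{1/2})$. Combining Steps 1--3 with Slutsky's theorem yields \eqref{th-1-3}.

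\textbf{Step 4 (allowing $s^2\le n$ and Hilbert--Schmidt smoothness).} When $s$ is allowed to grow, the factorial $k!$ and constants $C_{k,n}\le e^{k(k-1)/n}$ start to matter. The bound on $\Var(\sum_{k\ge 2}S_k/k!)$ still telescopes under $\sigma^2 r/n\le 1$ provided $C_{k,n}^2\le e^{2k^2/n}$ stays bounded, which is guaranteed by $s^2\le n$ (and $k\le m<s$). For the remainder, I use the Hilbert--Schmidt bound $\|f\|_{(s),\mathrm{HS},\btheta}\le C_0 d^{m/2}$, and the factors $1/[\Gamma(s-k+1)k!]$ together with the $\sqrt{k!}$ coming from $\mathbb{E}\langle\bar\bep^{(k)},\bar\bep^{(k)}\rangle_k$ keep the series summable. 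For the analogous spectral-norm assumption $\|f^{(k)}(\btheta)\|_{\mathrm{S}}/d^{k/2}\le C_0$, the same argument is run using \eqref{lm-1-2}, where the combinatorial constant $r_k\le k!$ plays the role of the number of partitions and cancels against $1/k!$. The main technical obstacle throughout is this careful combinatorial bookkeeping for the remainder when $s$ is allowed to grow, since one cannot afford any loss beyond $(\sigma^2 r/n)^{s/2}$ in the final bound.
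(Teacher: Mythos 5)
Your overall strategy---decompose via Proposition~\ref{prop-1}, show the linear term $S_1$ satisfies the CLT via Lindeberg, and kill the higher-order $U$-variables and the remainder in probability under only second moments---matches the paper's proof. However, there are two real gaps, one of which is serious.

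\textbf{The serious gap is in Step~3.} For the first assertion of the theorem (under $s=O(1)$) only the \emph{spectral-norm} local H\"older condition $\|f\|_{(s),\btheta}\le C_0$ from \eqref{local-Holder} is assumed. You invoke \eqref{lm-1-1} with the bound $\|\Delta^{(m)}(t)\|_{m-k,k}\le\|f\|_{(s),\mathrm{HS},\btheta}(t\|\bar\bep\|)^{s-m}$, which uses the \emph{Hilbert--Schmidt} H\"older norm---a hypothesis available only in the second assertion. The two are not interchangeable: for $k\ge 1$ the mixed norm $\|\cdot\|_{m-k,k}$ is generally not controlled by the spectral norm alone. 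As the paper notes after Lemma~\ref{lm-1}, when $\scrH=\R^d$ one only has $\|\Delta^{(m)}\|_{m-k,k}\le d^{k-1}\|\Delta^{(m)}\|_{\mathrm{S}}$, and for large or infinite $d$ that factor destroys the bound. The paper's route for this case is instead to apply \eqref{lm-1-2}: decompose $\bar\bep^{\otimes(m-k)}\otimes\bar\bep^{(k)}$ into rank-one tensors so the spectral norm applies, paying only bounded combinatorial constants $r_k\le k!$ and $C^*_{m,n}$ (which is fine because $s=O(1)$), and then concentrate $\|\bar\bep\|$ and $\|\bw\|_2=(\sum_j\|\bep_j\|^2/n^2)^{1/2}$ via a second-moment Chebyshev event $\Omega_M$. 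Your $L_2$-in-probability route works for the second assertion, where the HS H\"older bound is assumed, but for the first you need \eqref{lm-1-2}.

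\textbf{A secondary, smaller issue is in Step~2.} You assert that \eqref{th-1-2} ``forces $\sigma^2 r/n\lesssim 1$'' via $V_1\le\|f^{(1)}(\btheta)\|^2\sigma^2$, but $\|f^{(1)}(\btheta)\|$ is not bounded in the hypotheses, so $V_1$ can be arbitrarily large and nothing forces $\sigma^2 r/n\lesssim 1$; hence you cannot a priori reduce the sum to its $k=2$ term. The conclusion is still correct, but the correct justification is that the map $k\mapsto\sigma^{2k}r^{k-1}/n^k=(\sigma^2/r)(\sigma^2r/n)^k$ is monotone, so $\max_{k\in[2,s]}$ of it is attained at an endpoint ($k=2$ or $k=s$), and the two pieces of \eqref{th-1-2} bound precisely those two endpoints, regardless of whether $\sigma^2r/n$ is below or above $1$. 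With $s=O(1)$ the sum is then at most a constant times the max, and Chebyshev finishes as you say.

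Step~1 is fine, and Step~4 is pointed in the right direction (HS H\"older for the remainder via \eqref{lm-1-1}, spectral norm with the $d^{k/2}$ scaling for $\Var(S_k)$, $s^2\le n$ to keep $C_{k,n}$ bounded), though less precise than the paper's explicit computation producing the $e^{2dM^2}$ and $\sqrt{C_{m,n}}$ factors.
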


In Theorem \ref{th-1}, all quantities, including $\scrH$, $f: \scrH\to \RR$, $\sigma$, $r(\bSigma)$ and $C_0$
are allowed to change with $n$.
While $d$ is just a constant in Theorem \ref{th-1}, for $\scrH=\R^d$
we have $\|f^{(k)}(\bx)\|_{\rm HS}^2 \le \|f^{(k)}(\bx)\|_{\rm S}^2d^{k - 1}$
and $\|f\|_{(s),{\rm HS},\btheta}^2 \le d^{m-1}\|f\|_{(s),\btheta}^2$ because
$f^{(m)}(\bx)\times_{k=1}^{m-2}\bfe_{i_k}$ is a $d\times d$ symmetric matrix given $\bx$ and canonical basis
vectors $\bfe_{i_1},\ldots,\bfe_{i_{m-2}}$, $1\le i_k\le d$.
For general $\scrH$, $d$ can be viewed as the effective rank of $f(\cdot)$.
Thus, the last statement of Theorem \ref{th-1} asserts that
the boundedness condition $s=O(1)$ in \eqref{th-1-2} on the order of the expansion
can be replaced by the boundedness condition $d=O(1)$ on the effective rank of the functional $f(\cdot)$..

Both components on the left-hand side of the first condition in \eqref{th-1-2} are needed.
For example, when $C_0\asymp 1$ and $\sigma\asymp 1$, the component $r^{1/2}(\bSigma)/n$
can be omitted when $V_1\asymp 1$ but needed when $V_1 = o(1)$.
Note that in a previous work \cite{koltchinskii2021}, asymptotic normality was proved under i.i.d. Gaussian assumption on $\bep_j$'s. Here, only independence is required. Especially, an interesting result of \cite{koltchinskii2021} is that when $V_1\asymp 1$ and $r(\bSigma) = n^{\alpha}$, the condition $s>1/(1-\alpha)$ is sharp for \eqref{th-1-3} in the Gaussian shift model. In Theorem \ref{th-1}, \eqref{th-1-2} matches this sharp threshold level on the smoothness index $s$ under the first and second moment conditions on the noise without additional distributional assumption.

\section{Estimation of non-smooth additive functions}

In this section we consider the estimation of additive functions of the form
\bel{f_0}
f(\btheta) = \frac{1}{d}\sum_{a=1}^d f_0(\theta_a),\quad \btheta = (\theta_1,\ldots,\theta_d)^\top,
\eel
for a given but possibly non-smooth function $f_0(\cdot)$,
based on independent observations $\bx_j = (x_{j,1},\ldots,x_{j,d})^\top$ with $\EE[\bx_j]=\btheta$.

We allow general $f_0$ satisfying a smoothness condition of small order,
including for example $f_0(x) =|x|^{p}$ with small $p > 0$.
Such functions are called non-smooth in the literature as their order of smoothness
is much smaller than the order of differentiability used in the analysis.

We assume the following conditions on the noise:
\bel{noise-cond}
&
\EE[\varepsilon_{j,a}\varepsilon_{j,b}]=0,\ \  1\le j\le n, 1\le a < b\le d,
\\ \nonumber
& \EE\big[|n^{-1}\sum_{j=1}^n \veps'_j\veps_{j,a}|^{2k}\big] \le \sigma_n^{2k}2^{k-1}k!,\ \ 1\le a \le d, 1\le k \le \lceil s\rceil,
\eel
where $\varepsilon_{j,a} = x_{j,a} - \theta_a$ is the noise for the
$a$-th component of the $j$-th observation, and
$\{\veps'_j, 1\le j\le n\}$ is a Rademacher sequence independent of $\{\bep_j, 1\le j\le n\}$.

We may set $\sigma_n=1$ so that the model matches the Gaussian shift model
where only a single instance of $\calN(\btheta,\bI_d)$ is observed.
Typically in this setting,
consistent estimation of a single $f_0(\theta_a)$ is infeasible
as there is only one $\calN(\theta_a,1)$ observation,
and the consistent estimation of $f(\btheta)$ is achieved through high-order
bias correction and noise averaging over data components $a=1,\ldots, d$.
Thus, for $\sigma_n \asymp 1$ and general $f_0$ we assume $d\to\infty$
and apply HODSE with $m\to\infty$ in \eqref{new-est-2}.
Two important features of our approach become more crucial when $m\to\infty$, namely
(a): the explicit description of the remainder term in Proposition \ref{prop-1};
and (b) sharp control of constant factors in Lemma \ref{lm-1}
for the analysis of the estimator \eqref{new-est-2}.
Of course, our approach also allows $\sigma_n\to 0$ and $\sigma_n\to \infty$.

The first line of \eqref{noise-cond} asserts that the elements $\veps_{j,a}$ of the noise vector $\bep_j$
are uncorrelated for each $j$ in addition to the independence of $\bep_1,\ldots,\bep_n$.
The second line of \eqref{noise-cond} can be viewed as a sub-Gaussian condition
which holds when $\EE[\varepsilon_{j,a}^{2k}]\le \EE[|\calN(0,n\sigma_n^2)|^{2k}]$ for all positive integers $k\le s$
due to the independence of $\bep_1,\ldots,\bep_n$.
For example, \eqref{noise-cond} holds when $\veps_{j,a}$ are i.i.d. with $\P\{\veps_{j,a}=\pm n^{1/2}\sigma_n\}=1/2$.
The i.i.d. $\varepsilon_{j,a}\sim \calN(0,n)$ assumption, which implies \eqref{noise-cond} with $\sigma_n=1$,
is equivalent to the standard Gaussian shift model due to the sufficiency of $\bar{\bx}\sim \calN(\btheta,\bI_d)$.
However, \eqref{noise-cond} is much weaker than the Gaussian shift model
as the components of $\bx_j$ are only required to be uncorrelated, and non-Gaussian data
and heteroscedasticity are allowed.

In \cite{cai2011} the authors proved that based on a single $\calN(\btheta,\bI_d)$ observation,
the minimax rate for the estimation of the length normalized $\ell_1$ norm $f(\btheta) = \|\btheta\|_1/d$ is
\bes
\inf_{\widehat{f}}\sup_{\btheta\in\R^d} \EE\Big[\big(\widehat{f} - f(\btheta)\big)^2\Big] \asymp (\log d)^{-1}.
\ees
We note that the plug-in estimator is inconsistent here as there is only one observation available
and $f_0(x) = |x|$ is not analytic. As the $\calN(\btheta,\bI_d)$ model is a special case of \eqref{noise-cond},
our results in this section will extend their upper bound to more general $f_0$ and
heteroscedastic non-Gaussian data.

Our idea is to first apply kernel smoothing to the function $f_0$,
\bel{kernel-smoothing}
\qquad && f_h(x) := (K_h*f_0)(x) = \int K_h(x-t)f_0(t)dt,\ \ {\rm with}~~K_h(x) := h^{-1}K(x/h),
\eel
with proper a kernel $K(x)$ and a bandwidth $h>0$, and then apply the proposed \eqref{new-est-2} to
estimate the individual $f_h(\theta_a)$, $a=1,\ldots,d$. This leads to the estimator
\bel{separable-est}
\widehat{f} := \frac{1}{d}\sum_{a=1}^d \bigg\{ f_h(\bar{x}_a)
+ \sum_{k=2}^m \frac{f_h^{(k)}(\bar{x}_a)\bar{u}_a^{(k)}}{k!}\bigg\}
\eel
with $\bar{x}_a = \sum_{j=1}^n x_{j,a}/n$ and
$\bar{u}_a^{(k)} = \{(n-k)!/n!\}\sum_{1\le j_1\neq\cdots\neq j_k\le n}\prod_{\ell=1}^k (x_{j_\ell,a} - \bar{x}_a)$.

Let $m=s-1\le n$ for some integer $s>2$. By Proposition \ref{prop-1} and \eqref{lm-1-1} of Lemma \ref{lm-1},
\bel{separable-expansion}
\widehat{f} = \frac{1}{d}\sum_{a=1}^d f_h(\theta_a)
+ \sum_{k=1}^m \frac{1}{d}\sum_{a=1}^d \frac{f_h^{(k)}(\theta_a)\bar{\veps}_a^{(k)}}{k!}
- \frac{1}{d}\sum_{a=1}^d \Rem_{m,a}
\eel
with completely degenerate $\bar{\veps}_a^{(k)} = \{(n-k)!/n!\}\sum_{1\le j_1\neq\cdots\neq j_k\le n}\prod_{\ell=1}^k \veps_{j_\ell,a}$ and
\bel{separable-rem}
|\Rem_{m,a}| \le \big\|f_h\big\|_{(s)}\sum_{k=0}^m
\frac{|\bar{\veps}_a|^{s-k}|\bar{\veps}_a^{(k)}|}{(s-k)!k!}
\eel
where $\bar{\veps}_a = n^{-1}\sum_{j=1}^n \veps_{j,a}$ is the average of the $a$-th noise component.

It can be seen from \eqref{separable-expansion} that
while the kernel smoothing \eqref{kernel-smoothing} introduces some bias,
correction of additional bias is achieved through the application of HODSE
in the estimation of individual $f_h(\theta_a)$, and de-noising is
achieved through the averaging in the degenerate $U$-variables $\bar{\veps}_a^{(k)}$
and over the d-coordinates and terms of different orders in the second term.

By \eqref{noise-cond},
$\bar{\veps}_a^{(k)}, 1\le a \le d, 1\le k\le m$,
are uncorrelated with each other and
\bel{separable-U-bd}
\EE\bigg[\bigg(\sum_{a=1}^d \sum_{k=1}^m \frac{f_h^{(k)}(\theta_a)\bar{\veps}_a^{(k)}}{d(k!)}\bigg)^2\bigg]
\le \sum_{a=1}^d \sum_{k=1}^m C_{k,n}^2 \frac{|f_h^{(k)}(\theta_a)|^2\sigma_n^{2k}}{d^2 k!}
\eel
with $C_{k,n}\le e^{(k-1)k/n}$.
By the second line of \eqref{noise-cond}, the remainder is bounded by
\bel{separable-Rem-bd}
\EE\bigg[\bigg( \frac{1}{d}\sum_{a=1}^d \Rem_{m,a}\bigg)^2\bigg]
\le C_{s,n}^2 \|f_h\|_{(s)}^2\sigma_n^{2s}2^{7s-1}/s!.
\eel
The benefit of the neat formulas \eqref{new-prop-1-1} and \eqref{new-prop-1-2} is evident
in this analysis in view of \eqref{separable-Rem-bd} in the case of $\sigma_n\asymp 1$.
It follows from \eqref{separable-expansion}, \eqref{separable-U-bd} and \eqref{separable-Rem-bd}
that for integers $s$ satisfying $s>2$ and $2(s-1)s/n\le\log 2$
the error of the estimator \eqref{separable-est} is bounded by
 \bel{separable-error-bd}
\quad && \big\|\widehat{f}-f(\btheta)\big\|_{L_2(\mathbb{P})} \le
\big\{\bias_{h,d}^2(\btheta) +\kappa_{s,h,n,d}(\btheta)\big\}^{1/2}
+ \|f_h\|_{(s)}(2^{7/2}\sigma_n)^s\big/\sqrt{s!}
\eel
for the $L_2(\mathbb{P})$ risk given by $\|\widehat{f}-f(\btheta)\|_{L_2(\mathbb{P})}^2 = \EE\big[|\widehat{f}-f(\btheta)|^2\big]$,
where
\bel{separable-bias-var}
\bias_{h,d}(\btheta) = \frac{1}{d}\sum_{a=1}^d (f_0-f_h)(\theta_a),\
\kappa_{t,h,n,d}(\btheta) = \sum_{k=1}^{\lceil t\rceil-1}
\frac{\|f_h^{(k)}(\btheta)\|_2^2\sigma_n^{2k}}{(d^2/2)k!}
\eel
are respectively the bias introduced by \eqref{kernel-smoothing}
and an upper bound for the variance in \eqref{separable-U-bd}.

The convergence rate and sharpness of \eqref{separable-error-bd} would heavily depend on
the smoothness properties of $f_0$ and $f_h$ through the choice of the kernel $K(\cdot)$ in \eqref{kernel-smoothing}.
The following theorem provides an explicit
error bound in an ``$\alpha$-smooth'' scenario, allowing arbitrarily small smoothness index $\alpha$ for the function $f_0(\cdot)$.

\begin{restatable}{theorem}{TheoremSeparable}\label{th-separable}
Let $s$ be an integer satisfying $2<s \le \sqrt{(n/2)\log 2}$.
Let $f(\btheta)$ be as in \eqref{f_0} and $\widehat{f}$ as in \eqref{separable-est} with $m=s-1$.
If \eqref{noise-cond} holds, then \eqref{separable-error-bd} holds.
If \eqref{noise-cond} holds and
\bel{f-cond-alpha}
\qquad &&
\|f_0^{(k)}-f_h^{(k)}\|_{L_\infty}\le \eta_\alpha(h)/h^{k},\ 0\le k < \alpha,\ \
\|f_h^{(k)}\|_{L_\infty} \le \eta_\alpha(h)/h^{k},\ \alpha  \le k \le s,\ \
\eel
for certain $\alpha>0$ and $\eta_\alpha(h)>0$
for the $h$ and $f_h$ in \eqref{kernel-smoothing} with $(\sigma_n/h)^2 \le s/(2^7e)$, then
\bel{th-separable-1}
\qquad \big\|\widehat{f}-f(\btheta)\big\|_{L_2(\mathbb{P})}
&\le& \big|\bias_{h,d}(\btheta)\big|
+ \eta_\alpha(h)\big(s^{-1/4}+\sqrt{2/d}e^{(\sigma_n/h)^2/2}\big)
+ \kappa^{1/2}_{\alpha,0,n,d}(\btheta)
\cr &\le& \eta_\alpha(h)\big(1+ s^{-1/4}+\sqrt{2/d}e^{(\sigma_n/h)^2/2}\big)
+ \kappa^{1/2}_{\alpha,0,n,d}(\btheta)
\eel
where $\bias_{h,d}(\btheta)$ and $\kappa_{\alpha,h,n,d}(\btheta)$
are as defined in \eqref{separable-bias-var}.
\end{restatable}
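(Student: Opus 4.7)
The first assertion has in fact already been established in the derivations preceding the theorem: Proposition~\ref{prop-1} and inequality \eqref{lm-1-1} of Lemma~\ref{lm-1} yield the expansion \eqref{separable-expansion} with remainder bound \eqref{separable-rem}, and \eqref{noise-cond} then provides the orthogonality-based variance bound \eqref{separable-U-bd} and moment bound \eqref{separable-Rem-bd}, which together imply \eqref{separable-error-bd}. So only the second assertion requires a new argument, and the plan is to bypass \eqref{separable-error-bd} itself and work directly from \eqref{separable-expansion}, writing
\begin{eqnarray*}
\widehat f - f(\btheta) = -\bias_{h,d}(\btheta) + \sum_{k=1}^m T_k - d^{-1}\sum_{a=1}^d \Rem_{m,a},\quad T_k := (d\,k!)^{-1}\sum_{a=1}^d f_h^{(k)}(\theta_a)\bar{\veps}_a^{(k)}.
\end{eqnarray*}
Under \eqref{noise-cond} the $\bar{\veps}_a^{(k)}$ are pairwise uncorrelated across $(a,k)$ with $k\ge 1$ (across different $a$ by the first line of \eqref{noise-cond}, across different $k$ by the degeneracy of the $U$-products combined with independence of the $\bep_j$), hence the $T_k$ have mean zero and are $L_2$-orthogonal, so that $\|\sum_k T_k\|_{L_2}=\sqrt{\sum_k\|T_k\|_{L_2}^2}$. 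The triangle inequality then gives
\begin{eqnarray*}
\|\widehat f-f(\btheta)\|_{L_2} \le |\bias_{h,d}(\btheta)| + \sqrt{\sum_{k=1}^m \|T_k\|_{L_2}^2} + \Big\|d^{-1}\sum_{a=1}^d\Rem_{m,a}\Big\|_{L_2}.
\end{eqnarray*}

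The main obstacle, and the heart of the proof, is a sharp bound on $\sqrt{\sum_k\|T_k\|_{L_2}^2}$ at the standard-deviation level rather than at the variance level. From \eqref{separable-U-bd} with $C_{k,n}^2\le 2$ (ensured by $s\le\sqrt{(n/2)\log 2}$), one has $\|T_k\|_{L_2}\le\sqrt2\,\sigma_n^k\|f_h^{(k)}(\btheta)\|_2/(d\sqrt{k!})$. The $L_\infty$ bounds in \eqref{f-cond-alpha} together with the triangle inequality in $\ell_2(\R^d)$ yield $\|T_k\|_{L_2}\le a_k+b_k$, with $a_k:=\sqrt2\,\sigma_n^k\|f_0^{(k)}(\btheta)\|_2/(d\sqrt{k!})\cdot\mathbf 1\{k<\alpha\}$ and $b_k:=\sqrt{2/d}\,\eta_\alpha(h)(\sigma_n/h)^k/\sqrt{k!}$. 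Minkowski's inequality in $\ell_2$ then produces
\begin{eqnarray*}
\sqrt{\sum_k\|T_k\|_{L_2}^2}\ \le\ \sqrt{\sum_k a_k^2}+\sqrt{\sum_k b_k^2}\ \le\ \sqrt{\kappa_{\alpha,0,n,d}(\btheta)}+\eta_\alpha(h)\sqrt{2/d}\,e^{(\sigma_n/h)^2/2},
\end{eqnarray*}
where the $a$-sum equals $\kappa_{\alpha,0,n,d}(\btheta)$ exactly, by definition \eqref{separable-bias-var}, and the $b$-sum is controlled by $\sum_{k\ge 1}(\sigma_n/h)^{2k}/k!\le e^{(\sigma_n/h)^2}$. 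A direct bound on $\kappa_{s,h,n,d}$ through $(u+v)^2\le 2u^2+2v^2$ would instead leave a spurious $\sqrt2$ multiplying $\sqrt{\kappa_{\alpha,0,n,d}}$, which \eqref{th-separable-1} forbids; only the Minkowski-in-$\ell_2$ route preserves the unit coefficient.

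The remainder is handled as follows. Since $s$ is an integer, $\|f_h\|_{(s)}$ is the Lipschitz constant of $f_h^{(s-1)}$ and hence $\|f_h\|_{(s)}\le\|f_h^{(s)}\|_{L_\infty}\le\eta_\alpha(h)/h^s$ by \eqref{f-cond-alpha} applied at $k=s$; then \eqref{separable-error-bd} gives $\|d^{-1}\sum_a\Rem_{m,a}\|_{L_2}\le\eta_\alpha(h)(2^{7/2}\sigma_n/h)^s/\sqrt{s!}$. Stirling's lower bound $s!\ge\sqrt{2\pi s}(s/e)^s$ together with the hypothesis $(\sigma_n/h)^2\le s/(2^7 e)$ yields $(2^{7/2}\sigma_n/h)^s/\sqrt{s!}\le [2^7 e(\sigma_n/h)^2/s]^{s/2}/(2\pi s)^{1/4}\le (2\pi s)^{-1/4}\le s^{-1/4}$. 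Assembling the three pieces produces the first inequality in \eqref{th-separable-1}, and the second inequality follows from the $k=0$ instance of \eqref{f-cond-alpha}: $|\bias_{h,d}(\btheta)|\le\|f_0-f_h\|_{L_\infty}\le\eta_\alpha(h)$.
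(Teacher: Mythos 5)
Your treatment of the second assertion — passing from \eqref{separable-error-bd} to \eqref{th-separable-1} — is correct and tracks the paper's own argument closely. The Minkowski step is exactly what the paper does (the paper writes it as $\kappa^{1/2}_{s,h,n,d}(\btheta) \le \kappa^{1/2}_{\alpha,0,n,d}(\btheta) + \{\sum_{k=1}^{s-1}\eta_\alpha^2(h)(\sigma_n/h)^{2k}/((d/2)k!)\}^{1/2}$, which is the same triangle inequality in $\ell_2$ over $(a,k)$ that you phrase via $a_k + b_k$), your Stirling computation for the remainder term matches, and $|\bias_{h,d}|\le\eta_\alpha(h)$ is identical. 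So that part is fine.

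The gap is in your opening move. You declare the first assertion — that \eqref{noise-cond} implies \eqref{separable-error-bd} — as ``already established in the derivations preceding the theorem,'' and you invoke \eqref{separable-U-bd} and \eqref{separable-Rem-bd} as given. But the text preceding the theorem only \emph{states} those two inequalities; they are \emph{proved} inside the proof of Theorem \ref{th-separable}, and that proof is the technical heart of the result. In particular, \eqref{separable-Rem-bd} does not follow by any routine moment calculation from \eqref{noise-cond}: the remainder $\Rem_{m,a}$ in \eqref{separable-rem} mixes a power $|\bar\veps_a|^{s-k}$ of the non-degenerate average with the degenerate $U$-product $|\bar\veps_a^{(k)}|$, and the paper controls $\EE[(\bar\veps_a^{s-k}\bar\veps_a^{(k)})^2]$ by introducing auxiliary i.i.d.\ random variables $\veps''_j$ with $\P\{\veps''_j=2\}=1/3$, $\P\{\veps''_j=-1\}=2/3$ (chosen so that $\EE[\veps''_j]=0$ and $\EE[(\veps''_j)^k]\ge 1$ for integers $k\ge 2$), showing $|\EE[\prod_{\ell=1}^{2s}\veps_{j_\ell,a}]| \le \EE[\prod_{\ell=1}^{2s}(\veps''_{j_\ell}|\veps_{j_\ell,a}|)]$ term by term over all multi-indices, then symmetrizing with a Rademacher sequence to land on the hypothesis in the second line of \eqref{noise-cond}, incurring exactly the factor $2^{2s}\cdot 2^{2s}\cdot 2^{(s-1)/2}/\sqrt{s!}$ that ultimately gives the $2^{7s-1}/s!$ in \eqref{separable-Rem-bd}. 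Similarly, \eqref{separable-U-bd} requires verifying that the $\bar\veps_a^{(k)}$ are mutually uncorrelated across $(a,k)$ (the orthogonality across $a$ uses the first line of \eqref{noise-cond}, while orthogonality across $k$ uses independence of $\bep_1,\ldots,\bep_n$ and degeneracy) and bounding $\EE[(\bar\veps_a^{(k)})^2]\le C_{k,n}^2 k!(\EE\bar\veps_a^2)^k$. You state the orthogonality but leave the variance bound implicit. A complete proof of the theorem must supply both derivations; as written, your argument only finishes the easier half.

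One minor slip: you write that ``\eqref{separable-error-bd} gives $\|d^{-1}\sum_a\Rem_{m,a}\|_{L_2}\le\eta_\alpha(h)(2^{7/2}\sigma_n/h)^s/\sqrt{s!}$''; the remainder bound actually comes from \eqref{separable-Rem-bd} together with $\|f_h\|_{(s)}\le\eta_\alpha(h)/h^s$ (and the absorption of $C_{m,n}\le\sqrt{2}$ under the constraint $s\le\sqrt{(n/2)\log 2}$), not from \eqref{separable-error-bd} itself.
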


\begin{corollary}\label{cor-5-1}
Suppose \eqref{noise-cond} and \eqref{f-cond-alpha} hold with $\eta_\alpha(h) =C_\alpha h^\alpha$
for some constants $\alpha \in (0,1]$ and $C_\alpha<\infty$.
When $(\sigma_n/h)^2 = \log(d/\log d)$ in \eqref{th-separable-1},
\bel{th-separable-2}
\quad \big\|\widehat{f}-f(\btheta)\big\|_{L_2(\mathbb{P})}
&\le& \big|\bias_{h,d}(\btheta)\big|+ \big(s^{-1/4} +\sqrt{2/\log d} \big)
C_\alpha \sigma_n^\alpha \big/(\log(d/\log d)\big)^{\alpha/2}
\cr &\le& \big(C_\alpha + o(1)\big)\sigma_n^\alpha/(\log d)^{\alpha/2}.
\eel
\end{corollary}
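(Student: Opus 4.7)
The plan is to reduce directly to Theorem~\ref{th-separable} by plugging in the prescribed parameter choice $(\sigma_n/h)^2=\log(d/\log d)$ and then performing a few simplifications under the hypothesis $\eta_\alpha(h)=C_\alpha h^\alpha$.

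First I would observe that for $\alpha\in(0,1]$ the upper limit $\lceil\alpha\rceil-1$ of the sum defining $\kappa_{\alpha,0,n,d}(\btheta)$ in~\eqref{separable-bias-var} equals $0$, so this sum is empty and hence vanishes. The last term on the right-hand side of~\eqref{th-separable-1} therefore drops out, and only the bias and the error factor $(s^{-1/4}+\sqrt{2/d}\,e^{(\sigma_n/h)^2/2})\eta_\alpha(h)$ remain.

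Next I would substitute the parameter choice. The identity $(\sigma_n/h)^2=\log(d/\log d)$ gives $e^{(\sigma_n/h)^2/2}=(d/\log d)^{1/2}$, whence $\sqrt{2/d}\,e^{(\sigma_n/h)^2/2}=\sqrt{2/\log d}$. It also yields $h=\sigma_n/(\log(d/\log d))^{1/2}$, so that by hypothesis
\begin{equation*}
\eta_\alpha(h)=C_\alpha h^\alpha=\frac{C_\alpha\sigma_n^\alpha}{(\log(d/\log d))^{\alpha/2}}.
\end{equation*}
Inserting these two expressions into~\eqref{th-separable-1} immediately produces the first inequality of~\eqref{th-separable-2}. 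For the second inequality, I would bound $|\bias_{h,d}(\btheta)|$ using the $k=0$ case of~\eqref{f-cond-alpha}, which gives $|f_0(\theta_a)-f_h(\theta_a)|\le\eta_\alpha(h)$ for every $a$ and hence $|\bias_{h,d}(\btheta)|\le\eta_\alpha(h)$. Adding this to the error term produces the uniform bound $(1+s^{-1/4}+\sqrt{2/\log d})\eta_\alpha(h)$. Since $\log(d/\log d)=(\log d)(1+o(1))$ as $d\to\infty$, and $s$ may be taken to infinity compatibly with $s\le\sqrt{(n/2)\log 2}$, the bracketed prefactor tends to $1$, which delivers the claimed $(C_\alpha+o(1))\sigma_n^\alpha/(\log d)^{\alpha/2}$.

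The only delicate point is verifying the side condition $(\sigma_n/h)^2\le s/(2^7 e)$ from Theorem~\ref{th-separable} under our parameter choice, which reads $\log(d/\log d)\le s/(2^7 e)$. This forces $s$ to be at least of order $\log d$ and is implicit in the asymptotic regime where $s$ grows with $d,n$; it is consistent with $s\le\sqrt{(n/2)\log 2}$ as long as $\log d\ll\sqrt{n}$. Apart from this compatibility check, the argument is purely bookkeeping — there is no genuine analytic obstacle since Theorem~\ref{th-separable} already packages all of the probabilistic work.
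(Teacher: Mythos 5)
Your proposal is correct and is precisely the direct substitution argument that the corollary implicitly relies on (the paper gives no separate proof, deferring to Theorem~\ref{th-separable}). You correctly note the three ingredients: that $\kappa_{\alpha,0,n,d}$ vanishes because the sum runs over $1\le k\le \lceil\alpha\rceil-1=0$ for $\alpha\in(0,1]$, that the substitution $(\sigma_n/h)^2=\log(d/\log d)$ turns $\sqrt{2/d}\,e^{(\sigma_n/h)^2/2}$ into $\sqrt{2/\log d}$ and $\eta_\alpha(h)$ into $C_\alpha\sigma_n^\alpha/(\log(d/\log d))^{\alpha/2}$, and that the side condition $(\sigma_n/h)^2\le s/(2^7e)$ forces $s\gtrsim\log d$ (exactly the choice $s=\lceil 2^7e\log(d/\log d)\rceil$ made in Theorems~\ref{th-ell_1} and~\ref{th-ell_alpha}), consistent with $s\le\sqrt{(n/2)\log 2}$ when $\log d\ll\sqrt n$.
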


We notice that when $\veps_{j,a}$ are i.i.d. $\calN(0,\sigma^2)$, we have $\sigma_n=\sigma/\sqrt{n}$,
so that for $\alpha=1$ the convergence rate in \eqref{th-separable-2}
is faster than the usual parametric rate
due to the noise averaging over components $a=1,\ldots,d$.
At high noise level $\sigma_n\asymp 1$, consistent estimation of $f(\btheta)$ is still feasible
but the convergence rate is logarithmic for general $f_0$.

Interestingly, in \eqref{th-separable-2}, the condition on the order of expansion $m=s-1$
in the construction of the estimator \eqref{separable-est},
$2^7e\log(d/\log d) \le s \le \sqrt{(n/2)\log 2}$,
depends on $n$ and $d$ only, not on the noise level $\sigma_n$,
while the choice of the bandwidth in \eqref{kernel-smoothing},
$h = \sigma_n/\sqrt{\log(d/\log d)}$, is proportional to the noise level.
We also note from the theorem that the risk is typically
dominated by the bias in the kernel smoothing step.
This phenomenon also presents in some other problems with logarithmic convergence rates
such as deconvolution \cite{zhang1990fourier,fan1991optimal}.

Theorem \ref{th-separable} is proved in Section \ref{section: proofs} along with detailed derivations of
\eqref{separable-U-bd} and \eqref{separable-Rem-bd}. In the following two subsections, we verify condition \eqref{f-cond-alpha} for specific $f_0$
and choice of the kernel $K(x)$ in \eqref{kernel-smoothing}.
We note that while our general solution achieves the existing optimal minimax rate in
these two examples, the existing estimator uses polynomial approximation specifically
constructed for the individual $f_0$.

\subsection{Estimation of the $\ell_1$-norm}
For $f_0(x)=|x|$, the function is 1-Lipschitz and $f_h^{(2)}(x) = 2K_h(x) = 2h^{-1}K(x/h)$
in \eqref{kernel-smoothing}. Thus,
condition \eqref{f-cond-alpha} holds with $\alpha=1$ and $\eta_1(h)=C_1 h$
for all integers $s > 2$ when the kernel in \eqref{kernel-smoothing} satisfies
\bel{K-cond-ell_1}
\quad && \int K(x)dx =1,\ \int |x^\ell K(x)|dx \le C_1,\ 2\|K^{(k-2)}\|_{L_\infty} \le C_1,
\eel
for $\ell\in\{0,1\}$ and $2\le k\le s$.  When $K(\cdot)$ is the Fourier inversion of
a twice continuously differentiable function $Q(\cdot)$ with support $[-1,1]$,
these conditions on $K(\cdot)$ hold when
\bel{Q-cond-ell_1}
\qquad && Q(0) = \frac{1}{\sqrt{2\pi}},\ \supp(Q)=[-1,1],\ \|Q\|_{L_1}\le \frac{C_1}{\sqrt{2/\pi}},\
\max_{k=0,2}\big\|Q^{(k)}\big\|_{L_2}\le \frac{C_1}{\sqrt{2\pi}}.
\eel
The verification of the above claims is elementary but will be included
in the proof of the following theorem for completeness.

\begin{restatable}{theorem}{TheoremEllOne}\label{th-ell_1}
Let $s =\lceil 2^7 e\, \log(d/\log d)\rceil$ and $m=s-1$.
Suppose \eqref{noise-cond} holds and
$s \le \sqrt{(n/2)\log 2}$.
Let $f_h$ be as in \eqref{kernel-smoothing} with $f_0(x)=|x|$, $h = \sigma_n/\sqrt{\log(d/\log d)}$ and
a kernel function $K(\cdot)$ satisfying \eqref{K-cond-ell_1}.
Let $f(\btheta)=\|\btheta\|_1/d$ and $\widehat{f}$ be as in \eqref{separable-est}. Then,
\bes
\EE\big[|\widehat{f}-f(\btheta)|^2\big] \le \big(1+\sqrt{2/\log d} + s^{-1/4}\big)^2
C_1^2\sigma_n^2\big/\log(d/\log d).
\ees
Moreover, \eqref{K-cond-ell_1} holds when $K(\cdot)$ is
the Fourier inversion of a function $Q$ satisfying \eqref{Q-cond-ell_1}.
\end{restatable}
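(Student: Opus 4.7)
\textbf{Proof proposal for Theorem \ref{th-ell_1}.}

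The plan is to reduce the bound on $\E[|\widehat f - f(\btheta)|^2]$ to Corollary \ref{cor-5-1} by checking that the kernel-smoothed $|x|$ satisfies the smoothness hypothesis \eqref{f-cond-alpha} with $\alpha=1$ and $\eta_1(h)=C_1 h$, and then to verify separately that a kernel obtained by Fourier inversion of a function $Q$ satisfying \eqref{Q-cond-ell_1} obeys \eqref{K-cond-ell_1}. First I would dispose of the ``zero-order'' piece of \eqref{f-cond-alpha}: since $f_0(x)=|x|$ is $1$-Lipschitz and $\int K_h(t)dt=1$, the inequality $|f_h(x)-f_0(x)|\le \int |K_h(t)||t|\,dt = h\int|uK(u)|du \le C_1 h$ is immediate from the second bound in \eqref{K-cond-ell_1}. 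For $k=1$, writing $f_h'=K_h\ast f_0'$ and using $|f_0'|\le 1$ together with $\int|K|\le C_1$ gives $\|f_h'\|_\infty\le C_1=\eta_1(h)/h$. For $k\ge 2$, the distributional identity $f_0''=2\delta_0$ yields $f_h^{(k)}(x)=2K_h^{(k-2)}(x)=2h^{-(k-1)}K^{(k-2)}(x/h)$, so the third bound in \eqref{K-cond-ell_1} gives $\|f_h^{(k)}\|_\infty\le C_1 h^{-(k-1)} = \eta_1(h)/h^k$, as required.

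With \eqref{f-cond-alpha} in hand, I would apply Corollary \ref{cor-5-1} with $\alpha=1$, $C_\alpha=C_1$. The choice $h=\sigma_n/\sqrt{\log(d/\log d)}$ gives exactly $(\sigma_n/h)^2=\log(d/\log d)$, and the choice $s=\lceil 2^7 e\log(d/\log d)\rceil$ together with $s\le\sqrt{(n/2)\log 2}$ makes the standing hypotheses of Theorem \ref{th-separable} valid, in particular $(\sigma_n/h)^2\le s/(2^7 e)$. The bias satisfies $|\bias_{h,d}(\btheta)|\le\|f_h-f_0\|_\infty\le C_1 h$ uniformly in $\btheta$, so the right side of \eqref{th-separable-1} becomes
\begin{equation*}
\big\|\widehat f-f(\btheta)\big\|_{L_2(\P)}
\le \bigl(1+s^{-1/4}+\sqrt{2/\log d}\,\bigr)\,\frac{C_1\sigma_n}{\sqrt{\log(d/\log d)}},
\end{equation*}
and squaring gives the stated $L_2$ risk bound.

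It remains to verify the implication from \eqref{Q-cond-ell_1} to \eqref{K-cond-ell_1}, with $K=\calF^{-1}Q$ under the symmetric convention $(\calF g)(\xi)=(2\pi)^{-1/2}\int g(x)e^{-i\xi x}dx$. The normalization $\int K(x)dx=\sqrt{2\pi}\,Q(0)=1$ is immediate. For the $L^1$ bounds on $K$ and $xK$, I would exploit $\supp Q\subseteq[-1,1]$ and pass through $L^2$ via Plancherel: writing $(1+x^2)K(x) = \calF^{-1}(Q-Q'')(x)$ and using the Cauchy-Schwarz inequality
\begin{equation*}
\int |x^\ell K(x)|\,dx
\le \Bigl(\int (1+x^2)^2|K(x)|^2 dx\Bigr)^{1/2}\Bigl(\int \frac{x^{2\ell}}{(1+x^2)^2}\,dx\Bigr)^{1/2},\quad \ell\in\{0,1\},
\end{equation*}
the first factor equals $\|Q-Q''\|_{L_2}\le 2C_1/\sqrt{2\pi}$ by the triangle inequality and the $L^2$ bounds in \eqref{Q-cond-ell_1}, while the second factor is at most $\sqrt{\pi/2}$; hence both integrals are bounded by $C_1$. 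Finally, for $2\le k\le s$ the identity $K^{(k-2)}(x)=(2\pi)^{-1/2}\int (i\xi)^{k-2}Q(\xi)e^{i\xi x}d\xi$ together with $|\xi|^{k-2}\le 1$ on $\supp Q$ gives $\|K^{(k-2)}\|_{L_\infty}\le (2\pi)^{-1/2}\|Q\|_{L_1}\le C_1/2$, so $2\|K^{(k-2)}\|_{L_\infty}\le C_1$.

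The only genuinely delicate step is the third one: the $\int|xK|\,dx$ bound must be obtained without direct access to an $L^1$ norm of $Q$ or $Q'$ on the Fourier side, so the Cauchy-Schwarz splitting through $(1+x^2)K$ and the explicit finiteness of $\int x^2/(1+x^2)^2 dx$ are essential. Once these $L^1$, $L^2$, and $L^\infty$ inequalities are arranged with their sharp numerical factors, the three requirements of \eqref{K-cond-ell_1} fit simultaneously under the single constant $C_1$, closing the chain.
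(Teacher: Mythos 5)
Your proof is correct and follows essentially the same route as the paper's: verify \eqref{f-cond-alpha} with $\alpha=1$, $\eta_1(h)=C_1 h$ via the three kernel bounds in \eqref{K-cond-ell_1} (using $f_h^{(2)}=2K_h$ from $f_0''=2\delta_0$), invoke Corollary~\ref{cor-5-1} / Theorem~\ref{th-separable}, and separately derive \eqref{K-cond-ell_1} from \eqref{Q-cond-ell_1} by Fourier inversion, Plancherel, and Cauchy--Schwarz.

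One small but genuine technical difference is worth recording. For the $\int|x^\ell K(x)|\,dx$ bound, the paper splits the weight as $(1+x^2)^{-1}\cdot(1+x^2)|x^\ell K(x)|^2$, which after Plancherel produces $\pi^{1/2}\bigl(\|Q^{(\ell)}\|_{L_2}^2+\|Q^{(\ell+1)}\|_{L_2}^2\bigr)^{1/2}$ and therefore needs $\|Q'\|_{L_2}$; condition \eqref{Q-cond-ell_1} only bounds $\|Q^{(k)}\|_{L_2}$ for $k\in\{0,2\}$, so the paper's step implicitly relies on the interpolation $\|Q'\|_{L_2}^2\le\|Q\|_{L_2}\|Q''\|_{L_2}$. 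Your splitting $(1+x^2)^2|K(x)|^2\cdot x^{2\ell}/(1+x^2)^2$, with $(1+x^2)K=\calF^{-1}(Q-Q'')$ and $\int x^{2\ell}/(1+x^2)^2\,dx=\pi/2$ for $\ell\in\{0,1\}$, uses only $\|Q\|_{L_2}$ and $\|Q''\|_{L_2}$ and so applies \eqref{Q-cond-ell_1} verbatim, while still landing exactly on the constant $C_1$. This is a modest cleanup of the paper's step rather than a different proof.
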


The above theorem demonstrates that at the high-noise level $\sigma_n=1$ (e.g. $\Var(\veps_{j,a})=n$),
the degenerate statistical expansion \eqref{new-est-2} in high-order $s\asymp \sqrt{\log d}$
yields the rate optimal minimax upper bound in \cite{cai2011}.
The second conclusion of the theorem also holds when $K(x)$ is taken as the real part
of the Fourier inversion of a function $Q$ satisfying \eqref{Q-cond-ell_1}.

\subsection{Estimation of the $\ell_{p}$-norm}
For $f_0(x)=|x|^{p}$ with $p\in (0,1)$,
Theorem \ref{th-separable} is still applicable when
$K(\cdot)$ is the inverse Fourier transform of a function $Q(\cdot)$ satisfying
\eqref{Q-cond-ell_1}.

\begin{restatable}{theorem}{TheoremEllAlpha}\label{th-ell_alpha}
Let $f(\btheta)=\|\btheta\|_{p}^{p}/d$ with $0<p < 1$ and $f_0(x)=|x|^{p}$ in \eqref{f_0} .
Let $f_h$ be as in \eqref{kernel-smoothing} with $h = \sigma_n/\sqrt{\log(d/\log d)}$ and a kernel
$K(\cdot)$ being the inverse Fourier transform of $Q(\cdot)$ satisfying \eqref{Q-cond-ell_1}.
Let $s =\lceil 2^7 e\,\log(d/\log d) \rceil$ and $\widehat{f}$ be the estimator in \eqref{separable-est} with $m=s-1$.
Suppose \eqref{noise-cond} holds and
$s \le \sqrt{(n/2)\log 2}$. Then,
\bes
\EE\big[|\widehat{f}-f(\btheta)|^2\big] \le \big(1+\sqrt{2/\log d} + s^{-1/4}\big)^2
C_1^2\big(\sigma_n^2\big/\log(d/\log d)\big)^p.
\ees
\end{restatable}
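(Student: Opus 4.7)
The plan is to reduce the theorem to Corollary \ref{cor-5-1} (with $\alpha=p$) by verifying that $f_0(x)=|x|^p$ and the kernel $K=\mathcal F^{-1}Q$ satisfy condition \eqref{f-cond-alpha} with $\eta_p(h)=C_p h^p$, uniformly in $k=0,1,\dots,s$. Because $s\asymp\log(d/\log d)$ is allowed to diverge with $d$, the decisive difficulty is to obtain constants that do not deteriorate with $k$. Once this is done, inserting $\eta_p(h)=C_p h^p$ and $h=\sigma_n/\sqrt{\log(d/\log d)}$ into \eqref{th-separable-1} (noting that $(\sigma_n/h)^2=\log(d/\log d)\le s/(2^7 e)$ by the choice of $s$, and that the variance term $\kappa_{p,0,n,d}$ in \eqref{separable-bias-var} is empty since $\lceil p\rceil-1=0$) gives the claimed bound after squaring and absorbing $C_p$ into the constant called $C_1$.

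The bound at $k=0$ is straightforward: since $|x|^p$ is $p$-H\"older and $K$ decays like $1/(1+x^2)$ (from $x^2K(x)=-\mathcal F^{-1}(Q'')(x)$ and the $L_2$ bound on $Q''$ in \eqref{Q-cond-ell_1}), one has $\|f_0-f_h\|_{L_\infty}\le h^p\int|u|^p|K(u)|\,du\lesssim h^p$. The derivative bounds $\|f_h^{(k)}\|_{L_\infty}\le C_p h^{p-k}$ for $k\ge 1$ are the key step. Starting from the classical Fourier identity $\mathcal F(|x|^p)(\xi)=c_p|\xi|^{-1-p}$ for $p\in(0,1)$ (as a tempered distribution) and $\mathcal F(K_h)(\xi)=Q(h\xi)$ supported in $|\xi|\le 1/h$, the substitution $\eta=h\xi$ in $f_h^{(k)}=\mathcal F^{-1}((i\xi)^k\mathcal F(f_h))$ yields
\[
f_h^{(k)}(x)=c_p\,h^{p-k}\int_{-1}^{1}e^{ix\eta/h}(i\eta)^k Q(\eta)\,|\eta|^{-1-p}\,d\eta,
\]
which is absolutely convergent because $k\ge 1>p$ implies $k-1-p>-1$. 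Taking $L_\infty$ norms gives
\[
\|f_h^{(k)}\|_{L_\infty}\le c_p\|Q\|_{L_\infty}\,h^{p-k}\int_{-1}^{1}|\eta|^{k-1-p}\,d\eta
=\frac{2c_p\|Q\|_{L_\infty}}{k-p}\,h^{p-k}\le \frac{2c_p\|Q\|_{L_\infty}}{1-p}\,h^{p-k},
\]
uniformly in $k\ge 1$; in fact the constant decreases in $k$. Here $\|Q\|_{L_\infty}<\infty$ follows from \eqref{Q-cond-ell_1} via Sobolev embedding on $[-1,1]$ together with the interpolation $\|Q'\|_{L_2}^2\le\|Q\|_{L_2}\|Q''\|_{L_2}$ valid for compactly supported $Q$.

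The main obstacle is precisely this uniform-in-$k$ derivative bound, and the Fourier route succeeds where direct convolution estimates fail for two linked reasons. A naive inequality $\|f_h^{(k)}\|_{L_\infty}\le h^{-k}\|K^{(k)}\|_{L_1}\|f_0\|_{L_\infty}$ is hopeless because $\|f_0\|_{L_\infty}=\infty$, and bounding $\|K^{(k)}\|_{L_1}$ under the minimal assumptions of \eqref{Q-cond-ell_1} would anyway generate $k$-dependent losses from the $\xi^k$ factor in $\mathcal F(K^{(k)})(\xi)=(i\xi)^k Q(\xi)$. On the Fourier side both difficulties dissolve: the compact support of $Q$ caps $|\eta|$ at $1$ so $(i\eta)^k$ is harmless, while the same polynomial factor tames the singularity $|\eta|^{-1-p}$ of $\mathcal F(|x|^p)$ at the origin, leaving an integrand whose $L_1$ norm is uniformly bounded (indeed decreasing) in $k$. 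This establishes \eqref{f-cond-alpha} with $\alpha=p$ and $\eta_p(h)=C_p h^p$, after which Corollary \ref{cor-5-1} concludes the proof.
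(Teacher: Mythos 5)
Your proposal is correct and takes essentially the same Fourier-analytic route as the paper: compute $f_h^{(k)}$ on the frequency side, use the compact support of $Q$ so that the polynomial factor $(i\zeta)^k$ is harmless and simultaneously tames the singularity of $\widehat{|x|^p}$ at the origin, then rescale to peel off $h^{p-k}$, and finally feed $\eta_p(h)=C h^p$ into \Cref{th-separable}/\Cref{cor-5-1}. The only deviation is cosmetic: the paper uses the Fourier transform of $f_0^{(1)}$ and controls the residual integral by $\|Q\|_{L_1}$, whereas you work directly with $\mathcal F(|x|^p)$ and bound by $\|Q\|_{L_\infty}\int_{-1}^1|\eta|^{k-1-p}\,d\eta=2\|Q\|_{L_\infty}/(k-p)$, invoking a Sobolev/interpolation step to get $\|Q\|_{L_\infty}<\infty$ from \eqref{Q-cond-ell_1}. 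Your version is in fact a little cleaner at $k=1$ (the paper's $\|Q\|_{L_1}$ bound tacitly needs $|\eta|^{k-1-p}\le1$, which holds only for $k\ge 1+p$), at the small cost of the extra embedding argument and a $p$-dependent constant in place of the paper's $C_1$.
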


Again, the degenerate statistical expansion \eqref{new-est-2} in high-order $s\asymp \sqrt{\log d}$
yields the upper bound $O(\sigma_n^{2 p}/(\log d)^{p})$ on MSE which was shown to be minimax optimal
by \cite{collier2020} in the dense region.  We note that the estimator in \cite{collier2020} is implicitly defined through
the optimal $(2m)$-degree polynomial approximation of $|x|^p$ under the $L_\infty$ norm on $[-1,1]$
with integer $m\asymp \log d$.

\section{Proofs}
\label{section: proofs}

\begin{proof}[Proof of Proposition \ref{prop-1}]
An order $k$ tensor $T^{(k)}$ is symmetric if
\bes
\langle T^{(k)},\bv_1\otimes\cdots\otimes \bv_k\rangle_k
= \langle T^{(k)},\bv_{j_1}\otimes\cdots\otimes \bv_{j_k}\rangle_k
\ees
for all permutations of indices $\{j_1,\ldots, j_k\}=\{1,\ldots,k\}$.
 By algebra and the definition of $\bar{\bu}^{(k)}$,
 for any symmetric tensor $T^{(k)}$
\bel{pf-prop-1-1}
\big\langle T^{(k)},\bar{\bu}^{(k)}\big\rangle_k
&=& \sum_{1\le j_1\neq \cdots \neq j_k\le n}
\frac{\langle T^{(k)},(\bx_{j_1}-\bar{\bx})\otimes \cdots\otimes(\bx_{j_k} - \bar{\bx})\rangle_k}{n(n-1)\cdots(n-k+1)}
\cr &=& \sum_{1\le j_1\neq \cdots \neq j_k\le n}
\frac{\langle T^{(k)},
(\bep_{j_1}-\bar{\bep})\otimes \cdots\otimes(\bep_{j_k} - \bar{\bep})\rangle_k}{n(n-1)\cdots(n-k+1)}
\\ \nonumber &=& \sum_{j=0}^k {k\choose j}(-1)^j \langle T^{(k)}, \bar{\bep}^{\otimes j}\bar{\bep}^{(k-j)}\big\rangle_k.
\eel

Let $h_k(y) = \langle f^{(k)}(\bar{\bx} + y(\btheta - \bar{\bx})),\bar{\bep}^{(k)}\rangle_k$.
We have $h_k(1) =\langle f^{(k)}(\btheta),\bar{\bep}^{(k)}\rangle_k$  and
$h_k(0) =\langle f^{(k)}(\bar{\bx}),\bar{\bep}^{(k)}\rangle_k$.
Let $(J^\alpha h)(y) = \int_0^y h(t)(y-t)^{\alpha-1}dt/\Gamma(\alpha)$ be the Riemann–Liouville integral,
$J^\alpha h =(J^\alpha h)(1)$,
and $\Delta_h^{(j)}(t) = h^{(j)}(t)-h^{(j)}(0)$ for any function $h(t)$.
We write the Taylor expansion of $h_k(y)$ as
\bes
h_k(y) = \sum_{j=0}^{m-k-1} \frac{h_k^{(j)}(0)y^j}{j!} +(J^{m-k}h_k^{(m-k)})(y)
= \sum_{j=0}^{m-k} \frac{h_k^{(j)}(0)y^j}{j!} + (J^{m-k}\Delta_{h_k}^{(m-k)})(y).
\ees
As $\btheta - \bar{\bx} = - \bar{\bep}$,
\bes
h_k^{(j)}(y) = \langle f^{(k+j)}\big(\bar{\bx} + y(\btheta - \bar{\bx})\big),
(-\bar{\bep})^{\otimes j}\otimes \bar{\bep}^{(k)}\rangle_{k+j}.
\ees
Thus, taking $y=1$ in the expression for $h_k(y)$, we find that the Taylor expansion for $h_k(1)$ is
\bes
\langle f^{(k)}(\btheta),\bar{\bep}^{(k)}\rangle_k
&=& \sum_{j=0}^{m-k} \frac{h_k^{(j)}(0)}{j!}
+ J^{m-k}\Delta_{h_k}^{(m-k)}
\cr &=&
\sum_{j=0}^{m-k} \frac{\langle f^{(k+j)}\big(\bar{\bx})\big),
\bar{\bep}^{\otimes j}\otimes \bar{\bep}^{(k)}\rangle_{k+j}}{(-1)^j j!}
+ \frac{\langle J^{m-k}\Delta^{(m)},
\bar{\bep}^{\otimes (m-k)}\otimes \bar{\bep}^{(k)}\rangle_{m}}{(-1)^{m-k}}.
\ees
Summing the above expression over $k=0,\ldots,m$, we have
\bel{pf-prop-1-2}
\sum_{k=0}^m \frac{\langle f^{(k)}(\btheta),\bar{\bep}^{(k)}\rangle_k}{k!}
= \sum_{k=0}^m \sum_{j = 0}^{m-k}
\frac{\langle f^{(k+j)}(\bar{\bx}),\bar{\bep}^{\otimes j}\otimes \bar{\bep}^{(k)}\rangle_{k_j}}{(-1)^j j! k!}
+ \Rem_m
\eel
with the remainder term
\bes
\Rem_m
= \sum_{k=0}^{m} \frac{\langle J^{m-k}\Delta^{(m)},
\bar{\bep}^{\otimes (m-k)}\otimes \bar{\bep}^{(k)}\rangle_m}{(-1)^{m-k} k!}
\ees
in \eqref{new-prop-1-2}.
Let $\ell+j=k$. As $f^{(k)}(\bar{\bx})$ are symmetric tensors,
\bes
\widehat{f}
&=& \sum_{k=0}^m \frac{\langle f^{(k)}(\bar{\bx}),\bar{\bu}^{(k)}\rangle_k}{k!}
\cr &=& \sum_{0\le j\le k\le m} {k\choose j}
\frac{\langle f^{(k)}(\bar{\bx}),\bar{\bep}^{\otimes j}\otimes \bar{\bep}^{(k-j)}\rangle_k}{(-1)^j k!}
\cr &=& \sum_{\ell=0}^m \sum_{j = 0}^{m-\ell}
\frac{\langle f^{(\ell+j)}(\bar{\bx}),\bar{\bep}^{\otimes j}\otimes \bar{\bep}^{(\ell)}\rangle_{\ell+j}}{(-1)^j j! \ell!}
\ees
by the formula for $\bar{\bu}^{(k)}$ in \eqref{pf-prop-1-1}. This and \eqref{pf-prop-1-2} yield \eqref{new-prop-1-1}.
\end{proof}

\begin{proof}[Proof of Lemma \ref{lm-1}]
By the definition of the norm $\|T^{(m)}\|_{m-k,k}$,
\begin{equation*}
\label{lm-1-p-1}
  |\Rem_m| \leq
\sum_{k=0}^{m} \Big(\max_{0<t\leq 1} \frac{\|\Delta^{(m)}(t)\|_{m-k,k}}{(t\|\bar{\bep}\|)^{s-m}} \Big) \int_0^1 \frac{t^{s-m}
\|\bar{\bep}\|^{(s-k)} \|\bar{\bep}^{(k)}\|_{HS}}{(m-k-1)! k!}(1-t)^{m-k-1}dt.
\end{equation*}
Then inequality \eqref{lm-1-1} follows directly by applying the following fact to \eqref{lm-1-p-1}:
\bes
\int_0^1 \frac{t^{s-m}(1-t)^{m-k-1}}{(m-k-1)!} dt = \frac{\Gamma(s-m+1)\Gamma(m-k)}{\Gamma(s-k+1)(m-k-1)!}
= \frac{\Gamma(s-m+1)}{\Gamma(s-k+1)}\le  \frac{1}{\Gamma(s-k+1)}.
\ees
The inequality follows from the convexity of $\Gamma(x)$ in $[1,2]$ and $\Gamma(2)=\Gamma(1)=1$.

To prove \eqref{lm-1-2}, we need to write $\bar{\bep}^{\otimes (k-\ell)}\bar{\bep}^{(k)}$
as a sum of rank-one tensors before we can apply the spectrum norm on $\Delta^{(m)}(t)$.
To this end we observe that for any order $k$ symmetric tensor $T^{(k)}$
and the completely degenerate tensor $\bar{\bep}^{(k)}$ in \eqref{new-eps-(k)}
\bes
&& (n^k/C_{k,n})\langle T^{(k)},\bar{\bep}^{(k)}\rangle_k
\cr &=& \bigg\langle T^{(k)},\sum_{1\le j_1\neq \cdots \neq j_k\le n}
\bep_{j_1}\otimes \cdots\otimes \bep_{j_k} \bigg\rangle_k
\cr &=& \bigg\langle T^{(k)}, \sum_{1\le j_1\neq \ldots \neq j_{k-1}\le n}
\bep_{j_1}\otimes \cdots\otimes \bep_{j_{k-1}}\otimes \bigg(n\bar{\bep} - \sum_{a=1}^{k-1}\bep_{j_a}\bigg)
\bigg\rangle_k
\cr &=& \bigg\langle T^{(k)}, \big(n\bar{\bep}\big)\otimes \sum_{1\le j_1\neq \ldots \neq j_{k-1}\le n}
\bep_{j_1}\otimes \cdots\otimes \bep_{j_{k-1}}\bigg\rangle_k
\cr && - (k-1)\bigg\langle T^{(k)}, \big(n\bar{\bep}\big)\otimes \sum_{1\le j_1\neq \ldots \neq j_{k-1}\le n}
\bep_{j_1}^{\otimes 2}\otimes \cdots\otimes \bep_{j_{k-1}}\bigg\rangle_k.
\ees
This turns a summation of $k$ indices into $k$ summations of $(k-1)$ indices, counting multiplicity.
We shall repeat this process until
we write the above into a sum of terms of the following format
\bes
(-1)^{(k_1-1)+\cdots+(k_b-1)}\bigg\langle T^{(k)},
\big(n\bar{\bep}\big)^{\otimes(k-\ell)} \otimes \sum_{1\le j_1\neq \ldots \neq j_b\le n}
\bep_{j_1}^{\otimes k_1}\otimes \cdots\otimes \bep_{j_b}^{\otimes k_b}\bigg\rangle_k
\ees
for some integer $b\in [0,k/2]$ and $2\le k_1\le \cdots \le k_b$ satisfying $\sum_{a=1}^b k_a=\ell$.
By induction, the sum of the multiplicities of such terms, say $C^{(k)}_{k_1,\ldots,k_b}$
is bounded by $k!$. This means
\bes
&& (n^k/C_{k,n})\langle T^{(k)},\bar{\bep}^{(k)}\rangle_k
\cr &=&
\sum_{b\ge 0, 2\le k_1\le\cdots\le k_b \atop k_1+\ldots + k_b = \ell \le k}
\frac{C^{(k)}_{k_1,\ldots,k_b}}{(-1)^{\ell-b}}
\bigg\langle T^{(k)}, \big(n\bar{\bep}\big)^{\otimes(k-\ell)} \otimes \sum_{1\le j_1\neq \ldots \neq j_b\le n}
\bep_{j_1}^{\otimes k_1}\otimes \cdots\otimes \bep_{j_b}^{\otimes k_b}\bigg\rangle_k
\ees
where $C^{(k)}_{k_1,\ldots,k_b}$ are positive integers satisfying
\bes
r_k = \sum_{b\ge 0, 2\le k_1\le\cdots\le k_b \atop k_1+\ldots + k_b = \ell \le k}
C^{(k)}_{k_1,\ldots,k_b}\le k!.
\ees
It follows by simple algebra that
\bes
&& \langle T^{(k)},\bar{\bep}^{(k)}\rangle_k
\cr &=&
\sum_{b\ge 0, 2\le k_1\le\cdots\le k_b \atop k_1+\ldots + k_b = \ell \le k}
\frac{C_{k,n}C^{(k)}_{k_1,\ldots,k_b}}{C_{b,n}(-1)^{\ell-b}}
\bigg\langle T^{(k)}, \sum_{1\le j_1\neq \ldots \neq j_b\le n}
\frac{\bep_{j_1}^{\otimes k_1}\otimes \cdots\otimes \bep_{j_b}^{\otimes k_b}\otimes \bar{\bep}^{\otimes(k-\ell)}}
{n(n-1)\cdots(n-b+1)n^{\ell-b}} \bigg\rangle_k.
\ees
Applying the above identity to each term in the remainder formula \eqref{new-prop-1-2} and the spectrum norm,
we find that
\bes
&& \Big|\langle \Delta^{(m)}(t),\bar{\bep}^{\otimes (m-k)}\otimes \bar{\bep}^{(k)}\rangle_m\Big|
\cr &\le& \|\Delta^{(m)}(t)\|_{\rm S}
\sum_{b\ge 0, 2\le k_1\le\cdots\le k_b \atop k_1+\ldots + k_b = \ell \le k}
\frac{C_{k,n}C^{(k)}_{k_1,\ldots,k_b}}{C_{b,n}}
\sum_{1\le j_1\neq \ldots \neq j_b\le n}
\frac{\|\bep_{j_1}\|^{k_1}\cdots\|\bep_{j_b}\|^{k_b}\|\bar{\bep}\|^{m-\ell}}{n(n-1)\cdots(n-b+1)n^{\ell-b}}.
\ees
This gives \eqref{lm-1-2} as in \eqref{lm-1-p-1}.

Finally we provide an upper bound for $C_{k,n}$ by a variation of
the Stirling's formula. Let $c_n = n! e^n/(n+1/2)^{n+1/2}$ and $x_n=1/(n+1)$.
We have
\bes
\frac{c_{n+1}}{c_n}
= \frac{(n+1)e (n+1/2)^{n+1/2}}{(n+3/2)^{n+3/2}}
= \frac{e (1-x_n/2)^{1/x_n-1/2}}{(1+x_n/2)^{1/x_n+1/2}}
= e\bigg(\frac{(1-x_n/2)^{1-x_n/2}}{(1+x_n/2)^{1+x_n/2}}\bigg)^{1/x_n}.
\ees
Due to $-\log(1-t/2) - \log(1+t/2)\ge 0$
\bes
&& \frac{1}{x}\bigg(\bigg(1-\frac{x}{2}\bigg)\log\bigg(1-\frac{x}{2}\bigg) -
\bigg(1+\frac{x}{2}\bigg)\log\bigg(1+\frac{x}{2}\bigg)\bigg)
\cr &=& \frac{1}{x}\int_0^x \bigg(\frac{-1}{2} - \frac{1}{2}\log\bigg(1-\frac{t}{2}\bigg)
- \frac{1}{2} - \frac{1}{2}\log\bigg(1+\frac{t}{2}\bigg)\bigg)dt \ge -1,
\ees
so that $c_{n+1}\ge c_n$.  It follows that
\bes
C_{k,n}
&=& \frac{n^{k-1}(n-k)!}{(n-1)!}
\cr &\le& \frac{n^{k-1}e^{k-n}(n-k+1/2)^{n-k+1/2}}{e^{1-n} (n-1/2)^{n-1/2}}
\cr &\le & e^{k-1}\bigg(1+\frac{1/2}{n-1/2}\bigg)^{k-1}\bigg(1 - \frac{k-1}{n-1/2}\bigg)^{n-k+1/2}
\cr &\le & \exp\bigg(k-1 + \frac{(k-1)/2}{n-1/2} - \frac{(k-1)(n-k+1/2)}{n-1/2}\bigg)
\cr & = & \exp\big((k-1)(k-1+1/2)/(n-1/2)\big).
\ees
This gives $C_{k,n}\le e^{(k-1)(2k-1)/(2n-1)} \le e^{(k-1)k/n}$.
\end{proof}

\begin{proof}[Proof of Theorem \ref{TheoBias}]
By definition $\|\Delta^{(m)}(t)\|_{\rm S}\le \|f\|_{(s)}(t\|\bar{\bep}\|)^{s-m}$.
By \eqref{lm-1-2},
\bes
\|\Rem_m\|_{L_p(\P)}
\le C_{m,n}'\|f\|_{(s)}
\max_{k_1+\ldots + k_b = \ell \le m \atop k_a\ge 2\,\forall a; b\ge 0}
\bigg\|\sum_{j_1\neq \ldots \neq j_b}
\frac{\|\bep_{j_1}\|^{k_1}\cdots\|\bep_{j_b}\|^{k_b}\|\bar{\bep}\|^{s-\ell}}
{n^{k_1+\cdots+k_b}}\bigg\|_{L_p(\P)}
\ees
where $C_{m,n}'$ is defined and bounded by
\bes
C_{m,n}' =
\sum_{k_1+\ldots + k_b = \ell \le k\le m \atop k_a\ge 2\,\forall a; b\ge 0}
\frac{C_{b,n}(C_{k,n}/C_{b,n})C^{(k)}_{k_1,\ldots,k_b}}{(m-k)! k!} \le \sum_{k=0}^m \frac{C_{k,n}}{(m-k)!} = C^*_{m,n}.
\ees
Let $\bw = (\|\bep_1\|/n,\ldots,\|\bep_n\|/n)^\top$.
Due to $\|\bw\|_k\le \|\bw\|_2$ for $k\ge 2$,
\bes
\|\Rem_m\|_{L_p(\P)}
&\le& C_{m,n}'\|f\|_{(s)}
\max_{k_1+\ldots + k_b = \ell \le m \atop k_a\ge 2\,\forall a; b\ge 0}
\bigg\|\|\bar{\bep}\|^{s-\ell}\prod_{a=1}^b \|\bw\|_{k_a}^{k_a}\bigg\|_{L_p(\P)}
\cr &\le& C_{m,n}'\|f\|_{(s)}
\max_{0\le \ell \le m}
\big\|\|\bar{\bep}\|^{s-\ell}\|\bw\|_2^{\ell}\big\|_{L_p(\P)}.
\ees
This gives \eqref{TheoBias-1} with an application of H\"older's inequality,
\bes
\|\Rem_m\|_{L_p(\P)}
&\le& C_{m,n}'\|f\|_{(s)}
\max\Big\{\|\bar{\bep}\|_{L_{ps}}^{s}, \big\|\|\bw\|_{2}\big\|_{L_{ps}(\P)}^s\Big\}
\cr & = & C_{m,n}^*\|f\|_{(s)}
\max\bigg\{\|\bar{\bep}\|_{L_{ps}}^{s}, \bigg\|\sum_{j=1}^n \frac{\|\bep_j\|^2}{n^2}\bigg\|^{s/2}_{L_{ps/2}(\P)}\bigg\}.
\ees

For independent $\bep_j$, $|\EE[\widehat{f}]-f(\btheta)|\le \|\Rem_m\|_{L_p(\P)}$
and Rosenthal's inequality applies:
\bes
\big\|\bar{\bep}\|_{L_{ps}(\P)}^{ps}
\le C'''_{ps} \bigg(\big\|\bar{\bep}\|_{L_2(\P)}^{ps}
+ \sum_{j=1}^n \frac{\|\bep_j\|_{L_{ps}(\P)}^{ps}}{n^{ps}}\bigg)
= C'''_{ps} \big(\big\|\bw\|_{L_2(\P)}^{ps} + \|\bw\|_{L_{ps}(\P)}^{ps}\big)
\ees
with $C'''_s = (1+s/\log(s/2))^s$ for $s\ge 4$ (\cite{kwapien1991}, Theorem 3.3), and
\bes
&& \bigg\| \bigg(\sum_{j=1}^n \frac{\|\bep_j\|^2}{n^2}\bigg) - \|\bar{\bep}\|_{L_2(\P)}^2\bigg\|^{ps/2}_{L_{ps/2}(\P)}
\cr &\le& C'''_{ps/2} \bigg(\bigg(\sum_{j=1}^n \frac{\|\bep_j\|_{L_4(\P)}^4}{n^4}\bigg)^{ps/4}
+ \sum_{j=1}^n \frac{\|\bep_j\|_{L_{ps}(\P)}^{ps}}{n^{ps}}\bigg)
\cr & = & C'''_{ps/2}  \big(\big\|\bw\|_{L_4(\P)}^{ps} + \|\bw\|_{L_{ps}(\P)}^{ps}\big)
\ees
with $C'''_{ps/2}\le C''_{ps}$.
Because $\big\|\bw\|_{L_4(\P)} \le \max\{\big\|\bw\|_{L_2(\P)}, \big\|\bw\|_{L_4(\P)}\}$,
the second inequality in \eqref{TheoBias-2} follows by inserting the above inequalities into
\eqref{TheoBias-1}.
\end{proof}

\begin{proof}[Proof of Theorem \ref{th-risk-bd}]
With the degenerate $U$-tensors $S_k$ in \eqref{S_k}, we write
\bes
\widehat{f} - f(\btheta) = \sum_{k=1}^m \frac{S_k}{k!} - \Rem_m.
\ees
We first bound the second moment of $S_k$ in the sum as the leading term.
For $k=1$,
\bes
\EE\big[S_1^2\big]
= \EE\big[\big\langle f^{(1)}(\btheta),\bep_1\big\rangle_1^2\big]\big/n
= V_1/n \le C_0\sigma^2/n.
\ees
For $k=2,\ldots,m$, $\EE\big[\big\langle f^{(k)}(\btheta),\bep_{j_1}\otimes \cdots\otimes \bep_{j_k}\big\rangle_k
\big\langle f^{(k)}(\btheta),\bep_{j'_1}\otimes \cdots\otimes \bep_{j'_k}\big\rangle_k\big]=0$ when
$j_1<\cdots<j_k$, $j_1'<\cdots<j_k'$ and $j_\ell \neq j'_\ell$ for some $\ell$. Thus, in the i.i.d. case
\bes
\EE\big[S_k^2\big]
= \frac{\EE\big[\big\langle f^{(k)}(\btheta),\bep_1\otimes \cdots\otimes \bep_k\big\rangle_k^2\big]}
{n(n-1)\cdots(n-k+1)/k!}
= \frac{C_{k,n}k!}{n^k}V_k,
\ees
which gives \eqref{var-S_k}. In the i.n.i.d. case,
$\bSigma=n^{-1}\sum_{j=1}^n\bSigma_j$ with $\bSigma_j = \EE[\bep_j\otimes\bep_j]$, so that
\bes
\EE\big[S_k^2\big]
&=& \bigg(\frac{C_{k,n}k!}{n^{k}}\bigg)^2 \sum_{1\le j_1<\cdots < j_k\le n}
\big\langle f^{(k)}(\btheta),f^{(k)}(\btheta)\times_1\bSigma_{j_1}\times_2  \cdots \times_k \bSigma_{j_k}\big\rangle_k
\cr &\le & \frac{C_{k,n}^2k!}{n^{2k}} \sum_{1\le j_1, \ldots, j_k\le n}
\big\langle f^{(k)}(\btheta),f^{(k)}(\btheta)\times_1\bSigma_{j_1}\times_2  \cdots \times_k \bSigma_{j_k}\big\rangle_k
\cr & = & (C_{k,n}^2k!/n^k)V_k,
\ees
which gives \eqref{var-S_k-inid}. Moreover, because $\EE[S_{k_1}S_{k_2}]=0$ for $1\le k_1<k_2\le m$,
\bes
\EE\bigg[\bigg(\sum_{k=1}^m S_k/k!\bigg)^2\bigg]
\le \sum_{k=1}^m \frac{C_{k,n}^2}{n^{k}k!}V_k.
\ees
This and \eqref{TheoBias-2} yield \eqref{th-risk-bd-1}.

Next we verify the upper bound \eqref{var-S_k-bd} for $V_k$.
For $k=2$,
\bes
V_2 &=& \big\langle f^{(2)}(\btheta),f^{(2)}(\btheta)\times_1 \bSigma \times_2\bSigma \big\rangle_2
\cr &=& \trace\Big(\big(f^{(2)}(\btheta)\times_1\bSigma^{1/2}\times_2\bSigma^{1/2}\big)^{\otimes 2}\Big)
\cr &\le &\big\| f^{(2)}(\btheta)\times_1\bSigma^{1/2}\times_2\bSigma^{1/2}\big\|_{\rm S}
\trace\big(f^{(2)}(\btheta)\times_1\bSigma^{1/2}\times_2\bSigma^{1/2}\big)
\cr &\le& \|f^{(2)}(\btheta)\|_{\rm S}\sigma^2\|f^{(2)}(\btheta)\|_{\rm S}\sigma^2r.
\ees
For $k>2$, we write $\bSigma = \sum_\ell\lam_\ell \bv_\ell\otimes \bv_\ell$ as the eigenvalue decomposition and
use the bound
\bes
V_k &=& \big\langle f^{(k)}(\btheta),f^{(k)}(\btheta)\times_1\bSigma\cdots\times_k\bSigma \big\rangle_k
\cr &=&\sum_{\ell_3,\ldots,\ell_k}\big\langle f^{(k)}(\btheta),f^{(k)}(\btheta)\times_1\bSigma\times_2\bSigma
\times_{j=3}^k  \big(\lam_{\ell_j}\bv_{\ell_j}^{\otimes 2}\big)\big\rangle_k
\cr &=&\sum_{\ell_3,\ldots,\ell_k}\lam_{\ell_3}\cdots\lam_{\ell_k}
\big\langle \big(f^{(k)}(\btheta)\times_{j=3}^k  \bv_{\ell_j}\big),\big(f^{(k)}(\btheta)\times_{j=3}^k  \bv_{\ell_j}\big)
\times_1\bSigma\times_2\bSigma \big)\big\rangle_2
\cr & \leq &\sum_{\ell_3,\ldots,\ell_k}\lam_{\ell_3}\cdots\lam_{\ell_k}
\big\|f^{(k)}(\btheta)\times_{j=3}^k  \bv_{\ell_j}\big\|_{\rm S}^2 \sigma^4r
\cr &\le &\sum_{\ell_3,\ldots,\ell_k}\lam_{\ell_3}\cdots\lam_{\ell_k}\|f^{(k)}(\btheta)\|_{\rm S}^2 \sigma^4 r
\cr &=& \|f^{(k)}(\btheta)\|_{\rm S}^2\sigma^{2k} r^{k-1}.
\ees
Thus, \eqref{var-S_k-bd} holds.

Inserting \eqref{var-S_k-bd} and bounds $\max_{2\le k <s} \|f^{(k)}(\btheta)\|_{\rm S}\le C_0$ and $\|f\|_{(s),\btheta}\le C_0$
into \eqref{th-risk-bd-1}, we obtain \eqref{th-risk-bd-2} because
$\max\big\{\sigma^2r^{1/2}/n,\cdots, \sigma^{m}r^{(m-1)/2}/n^{m/2}, \sigma^s(r/n)^{s/2}\big\}$ is attained at
$\max\big\{\sigma^2r^{1/2}/n,\sigma^s(r/n)^{s/2}\big\}$.
\end{proof}

\begin{proof}[Proof of Theorem \ref{th-1}]
 We shall keep in mind that all quantities, including $\{\scrH, f, s, \sigma, r, C_0\}$,
are allowed to depend on $n$, so that $O(1)$ here is uniformly bounded by numerical constants.
When $s^2\le n$, we have $C_{k,n}\le C_{m,n} \le e^{(m-1)m/n}\le e$
and $C^*_{m,n}\le e^{2}$ in Theorem \ref{TheoBias}.

We first prove that the remainder term is of smaller order than
the standard deviation $(V_1/n)^{1/2}$ of the linear term, for the $V_1$
defined in \eqref{V_k}. Let $\bw = (\|\bep_1\|/n,\ldots,\|\bep_n\|/n)^\top$.
By the definition of the noise level $\sigma$ and effective rank $r$ in \eqref{effective-rank},
$\EE[\|\bar{\bep}\|^2] = \EE[\|\bw\|_2^2] = \sigma^2r/n$.
Thus, by Chebyshev's inequality, in a certain event $\Omega_M$ with $\P\{\Omega_M\}\ge 1- 2/M^2$
\bel{M-bd}
\|\bar{\bep}\|^2 \le M^2\sigma^2r/n,\quad  \|\bw\|_2^2 \le M^2\sigma^2r/n.
\eel
By \eqref{M-bd} and \eqref{lm-1-2} the remainder term is bounded in $\Omega_M$ by
\bes
|\Rem_m|
&\le & 2\|f\|_{(s),\btheta}
\sum_{k_1+\ldots + k_b = \ell \le k\le m \atop k_a\ge 2\,\forall a; b\ge 0}
\frac{(C_{k,n}/C_{b,n})C^{(k)}_{k_1,\ldots,k_b}}{\Gamma(s-k+1) k!}
\\ \nonumber && \quad\qquad\qquad \times \sum_{j_1\neq \ldots \neq j_b}
\frac{\|\bep_{j_1}\|^{k_1}\cdots\|\bep_{j_b}\|^{k_b}(M^2\sigma^2r/n)^{(s-\ell)/2}}
{n(n-1)\cdots(n-b+1)n^{\ell-b}}
\cr &\le & 2C^*_{m,n}
C_0\max_{k_1+\ldots + k_b = \ell \le k\le m \atop k_a\ge 2\,\forall a; b\ge 0}
\sum_{j_1\neq \ldots \neq j_b}
\frac{\|\bep_{j_1}\|^{k_1}\cdots\|\bep_{j_b}\|^{k_b}(M^2\sigma^2r/n)^{(s-\ell)/2}}{n^{\ell}}
\ees
as in the beginning of the proof of Theorem \ref{th-risk-bd}.
Moreover, for $k_1,\ldots,k_b$ satisfying $k_a\ge 2$ and $k_1+\ldots + k_b = \ell$,
$\|\bw\|_{k_a}\le\|\bw\|_2$ and \eqref{M-bd} gives
\bes
\sum_{j_1\neq \dots \neq j_b}
\frac{\|\bep_{j_1}\|^{k_1}\cdots\|\bep_{j_b}\|^{k_b}}
{n^{\ell}}
= \prod_{a=1}^b \|\bw\|_{k_a}^{k_a}\le \|\bw\|_2^\ell \le \big(M^2\sigma^2r/n\big)^{\ell/2}
\ees
It follows that under \eqref{th-1-2}, in the event $\Omega_M$ and for $s=O(1)$
\bel{pf-CLT-1}
|\Rem_m| \le 2C^*_{m,n}C_{m,n}C_0(M^2\sigma^2r/n)^{s/2} \ll \sqrt{V_1/n}.
\eel

Next, we consider the $U$-variables $S_k$.
By \eqref{S_k}, $S_k$ are uncorrelated as they are completely degenerate $U$-variables of order $k$,
so that \eqref{var-S_k-inid}, \eqref{var-S_k-bd} and \eqref{th-1-2} yield
\bel{pf-CLT-2}
\quad && \EE\bigg[\bigg(\sum_{k=2}^m \frac{S_k}{k!}\bigg)^2\bigg]
\le \sum_{k=2}^m \frac{(C_{k,n}^2)C_0^2\sigma^{2k}r^{k-1}}{n^k k!}
\le C^*_{m,n}\max_{k\in [2,s]}\frac{\sigma^{2k}r^{k-1}}{n^k} \ll V_1/n
\eel
as the maximum over $k\in [2,s]$ is attained at the endpoints.
In view of the above bounds for $|\Rem_m|$ and $|\sum_{k=2}^m S_k/k!|$ and
the definition of $S_k$ in \eqref{S_k}, we find that
the linear term with $k=1$ dominates in the expansion \eqref{new-prop-1-1}.
Thus, \eqref{th-1-3} follows from the Lindeberg central limit theorem.

Without the condition $s = O(1)$, the factor $M^s$ in \eqref{pf-CLT-1} is unbounded but
the constant factor $C^*_{m,n}$ in \eqref{pf-CLT-1} can be improved by using \eqref{lm-1-1}
instead of \eqref{lm-1-2} as follows.
In the event $\Omega_M$, \eqref{M-bd} and the Hilbert-Schmidt smoothness condition yield
\bes
|\Rem_m|
&\le & 2\|f\|_{(s),{\rm HS},\btheta}
\sum_{k=0}^{m}
\frac{\|\bar{\bep}\|^{s-k}\langle \bar{\bep}^{(k)},\bar{\bep}^{(k)}\rangle_k^{1/2}}{\Gamma(s-k+1) k!}
\cr &\le & 2C_0 d^{m/2}
\sum_{k=0}^{m}
\frac{(M^2\sigma^2r/n)^{(s-k)/2}\langle \bar{\bep}^{(k)},\bar{\bep}^{(k)}\rangle_k^{1/2}}{\Gamma(s-k+1) k!}.
\ees
Because $\bar{\bep}^{(k)}$ are degenerate $U$-tensors in \eqref{new-eps-(k)},
\bes
\EE\big[ \langle \bar{\bep}^{(k)},\bar{\bep}^{(k)}\rangle_k\big]
= k! \sum_{1\le j_1 < \cdots < j_k\le n}
\frac{\prod_{a=1}^k \EE\big[\|\bep_{j_a}\|^2\big]}{n(n-1)\cdots(n-k+1)}
\le  k! C_{k,n}(\sigma^2r/n)^k.
\ees
It follows that
\bes
&& \EE\bigg[ d^{m/2}\sum_{k=0}^{m}
\frac{(M^2\sigma^2r/n)^{(s-k)/2}\langle \bar{\bep}^{(k)},\bar{\bep}^{(k)}\rangle_k^{1/2}}{\Gamma(s-k+1) k!}\bigg]
\cr &\le & d^{m/2} \sum_{k=0}^{m} \frac{(M^2\sigma^2r/n)^{s/2}\sqrt{m! C_{m,n}}}{(m-k)! k!}
\cr & = & d^{m/2} (M^2\sigma^2r/n)^{s/2}\sqrt{C_{m,n}} 2^m/\sqrt{m!}
\cr & = & M^{s-m} (\sigma^2r/n)^{s/2}\sqrt{C_{m,n}}
\sqrt{4^md^mM^{2m}/m!}
\cr & \le & (1+M)(\sigma^2r/n)^{s/2}\sqrt{C_{m,n}} e^{2dM^2}.
\ees
Thus, for $d=O(1)$ and in an event $\Omega^*_M$ with at least probability $1-3/M^2$,
\bes
|\Rem_m|
\le 2C_0M^2(1+M)(\sigma^2r/n)^{s/2} e^{1+2dM^2} \ll (V_1/n)^{1/2}
\ees
as $C_{m,n} \le e^{m^2/n}\le e$.  Moreover, instead of \eqref{pf-CLT-2} we have
\bes
\EE\bigg[\bigg(\sum_{k=2}^m \frac{S_k}{k!}\bigg)^2\bigg]
\le \sum_{k=2}^m \frac{(C_{k,n}^2)C_0^2d^k\sigma^{2k}r^{k-1}}{n^k k!}
\le e^{d+2}C_0^2\max_{k\in [2,s]}\frac{\sigma^{2k}r^{k-1}}{n^k} \ll V_1/n.
\ees
As both the remainder and $\sum_{k=2}^m S_k/k!$ are of smaller order than
$(V_1/n)^{1/2}$ under the first condition of \eqref{th-1-2} and $d=O(1)$, \eqref{th-1-3} still holds
under the Lindeberg condition.
\end{proof}

\begin{proof}[Proof of Theorem \ref{th-separable}]
We first prove \eqref{separable-U-bd} and \eqref{separable-Rem-bd}.
Let $C_{k,n} = n^k(n-k)!/n! \le e^{(k-1)k/n}$ as in Lemma \ref{lm-1}.
It follows respectively from the independence of $\bep_1,\ldots,\bep_n$
and the first line of \eqref{noise-cond} that
$\EE[\bar{\veps}_a^{(k)}\bar{\veps}_b^{(k')}]=0$ for $k \neq k'$ and
\bes
\EE\Big[\bar{\veps}_a^{(k)}\bar{\veps}_b^{(k)}\Big]
&=& \bigg(\frac{C_{k,n}k!}{n^k}\bigg)^2\sum_{1\le j_1 < \cdots < j_k\le n}
\prod_{\ell=1}^k \EE[\veps_{j_\ell,a}\veps_{j_\ell,b}]
\cr &=& \frac{C_{k,n}^2k!}{n^{2k}} \bigg(\sum_{1\le j_1 \neq \cdots \neq j_k\le n}
\prod_{\ell=1}^k \EE[\veps_{j_\ell,a}^2]\bigg)I_{\{a=b\}}
\cr &\le & C_{k,n}^2k!\bigg(\sum_{1\le j_1,\ldots, j_k\le n}
\prod_{\ell=1}^k \frac{\EE[\veps_{j_\ell,a}^2]}{n^2}\bigg)I_{\{a=b\}}
\cr & = & C_{k,n}^2k!\big(\EE\big[\bar{\veps}_a^2\big]\big)^kI_{\{a=b\}}.
\ees
This and the second line of \eqref{noise-cond} with $k=1$ give \eqref{separable-U-bd}:
\bes
\EE\bigg[\bigg(\sum_{a=1}^d \sum_{k=1}^m \frac{f_h^{(k)}(\theta_a)\bar{\veps}_a^{(k)}}{d(k!)}\bigg)^2\bigg]
\le \sum_{a=1}^d \sum_{k=1}^m C_{k,n}^2
\frac{|f_h^{(k)}(\theta_a)|^2\sigma_n^{2k}}{d^2k!}.
\ees

Let $\veps''_j$ be i.i.d. random variables independent of $\{\bep_1,\ldots,\bep_n\}$ with
$\P\{\veps''_j = 2\} = 1/3$ and $\P\{\veps''_j = -1\} = 2/3$.
Because $\EE[\veps''_i]=0$ and $\EE[(\veps''_i)^k]\ge 1$ for all integers $k\ge 2$, we have
\bes
\bigg|\EE\bigg[\prod_{\ell=1}^{2s}\veps_{j_\ell,a}\bigg]\bigg|
\le \EE\bigg[\prod_{\ell=1}^{2s}\big(\veps_{j_\ell}''\big|\veps_{j_\ell,a}\big|\big)\bigg]
\ees
for all $1\le j_1,\ldots,j_{2s}\le n$ and integers $s\ge 1$ regardless of multiplicity in the indices. Thus,
\bes
C_{k,n}^{-2}\EE\big[\big(\bar{\veps}_a^{s-k}\bar{\veps}_a^{(k)}\big)^2\big]
&=& n^{-2s}\EE\bigg[\bigg(\sum_{1\le j_1\neq\cdots\neq j_k\le n}\sum_{1\le j_{k+1},\ldots,j_s\le n}
\prod_{\ell =1}^s \veps_{j_\ell,a}\bigg)^2\bigg]
\cr &\le & n^{-2s}\EE\bigg[\sum_{1\le j_1,\ldots, j_{2s}\le n}
\prod_{\ell =1}^{2s} \big(\veps_{j_\ell}''\big|\veps_{j_\ell,a}\big|\big)\bigg]
\cr & = & \EE\bigg[\bigg(\sum_{j=1}^n \veps_j''\big|\veps_{j,a}\big|/n\bigg)^{2s}\bigg].
\ees
By symmetrization with the Rademacher variables $\veps_j'$ and moment comparison, we have
\bes
\EE\bigg[\bigg(\sum_{j=1}^n \veps_j''\big|\veps_{j,a}\big|/n\bigg)^{2s}\bigg]
\le 2^{2s}\EE\bigg[\bigg(\sum_{j=1}^n \veps_j'\veps_j''\veps_{j,a}/n\bigg)^{2s}\bigg]
\le 2^{4s} \EE\bigg[\bigg(\sum_{j=1}^n \veps_j'\veps_{j,a}/n\bigg)^{2s}\bigg].
\ees
It follows from the above two inequalities and the second line of \eqref{noise-cond} that
\bes
C_{k,n}^{-1}\Big(\EE\big[\big(\bar{\veps}_a^{s-k}\bar{\veps}_a^{(k)}\big)^2\big]\Big)^{1/2}
\le 2^{2s} \bigg(\EE\bigg[\bigg(\sum_{j=1}^n \veps_j'\veps_{j,a}/n\bigg)^{2s}\bigg]\bigg)^{1/2}
\le 2^{2s} \sqrt{\sigma_n^{2s}2^{s-1}s!}.
\ees
Thus, by \eqref{separable-rem},
\bes
\EE\bigg[\bigg( \frac{1}{d}\sum_{a=1}^d \Rem_{m,a}\bigg)^2\bigg]
&\le& \bigg(\frac{\|f_h\|_{(s)}}{d}\sum_{a=1}^d\sum_{k=0}^s
\frac{C_{m,n}2^{2s} \sqrt{\sigma_n^{2s}2^{s-1}s!}}{(s-k)!k!}\bigg)^2
\cr & = & C_{m,n}^2\|f_h\|_{(s)}^2 \sigma_n^{2s}2^{7s-1}/s!.
\ees
This gives \eqref{separable-Rem-bd}.

Inserting \eqref{separable-U-bd} and \eqref{separable-Rem-bd} into \eqref{separable-expansion},
we find that \eqref{separable-error-bd} holds:
\bes
&& \sqrt{\EE\big[|\widehat{f}-f(\btheta)|^2\big]}
\cr &\le&
\bigg\{\bigg|\frac{1}{d}\sum_{a=1}^d (f_0-f_h)(\theta_a)\bigg|^2
+ \sum_{a=1}^d \sum_{k=1}^m \frac{|f_h^{(k)}(\theta_a)|^2\sigma_n^{2k}}{(d^2/2)k!}\bigg\}^{1/2}
+ \frac{\|f_h\|_{(s)}(2^{7/2}\sigma_n)^s}{\sqrt{s!}}
\ees
for integers $s$ satisfying $s>2$ and $(s-1)s\le(n/2)\log 2$.

Under the smoothness condition \eqref{f-cond-alpha}, \eqref{separable-error-bd} yields
\eqref{th-separable-1} via
\bes
\kappa^{1/2}_{s,h,n,d}(\btheta)
- \kappa^{1/2}_{\alpha,0,n,d}(\btheta)
\le \bigg\{\sum_{k=1}^{s-1} \frac{\eta_\alpha^2(h)(\sigma_n/h)^{2k}}{(d/2)k!}\bigg\}^{1/2}
\le \eta_\alpha(h)\sqrt{d/2}e^{(\sigma/h)^2/2},
\ees
$\|f_h\|_{(s)}\le\eta_\alpha(h)/h^s$, $s!\ge e^{-s}s^{s+1/2}\sqrt{2\pi}$
and $|\bias_{\alpha,h}|\le \eta_\alpha(h)$.
\end{proof}

\begin{proof}[Proof of Theorem \ref{th-ell_1}]
In the following proof, we use the same $C_1$ to bound different quantities to simplify notation although this does not give the sharpest constant factors in the risk bounds.
Let $y = (x-t)/h$ so that $t = x-hy$. By \eqref{kernel-smoothing} and the condition $\int K(x)dx =1$,
\bel{pf-ell_1-1}
f_h(x) = \int h^{-1}K((x-t)/h)f_0(t)dt = \int K(y)f_0(x-hy)dy.
\eel
By the conditions $\int K(x)dx =1$,
$|f_0(x)-f_0(y)|\le|x-y|$ and $\int |xK(x)|dx \le C_1$,
\bes
\big|f_h(x)-f_0(x)\big|
=\bigg|\int K(y)\big(f_0(x-hy) - f_0(x)\big)dy\bigg|
\le h\int |yK(y)|dy\le C_1h.
\ees
Moreover, \eqref{pf-ell_1-1} gives
$f_h^{(1)}(x) = \int K_h(y)f_0^{(1)}(x-y)dy = \int_{-\infty}^x K_h(y)dy - \int_x^\infty K_h(y)dy$
and $f_h^{(2)}(x) = 2K_h(x)$, so that $\|f_h^{(1)}\|_{L_\infty} \le \|K_h\|_{L_1}=\|K\|_{L_1}\le C_1$ and
$\|f_h^{(k)}\|_{L_\infty}=2\|K_h^{(k-2)}\|_{L_\infty} = 2h^{1-k}\|K^{(k-2)}\|_{L_\infty}\le C_1 h^{1-k}$
for $k\ge 2$. Thus, \eqref{K-cond-ell_1} implies \eqref{f-cond-alpha} with $\alpha=1$
and $\eta_1(h)=C_1h$,
and the conclusion follows from \eqref{th-separable-2} of Theorem \ref{th-separable}.

Next, we verify \eqref{K-cond-ell_1} under \eqref{Q-cond-ell_1}. By the Fourier transform formula,
\bes
Q(\zeta) = (2\pi)^{-1/2}\int_{-\infty}^\infty  e^{-i \zeta x}K(x)dx,\
K(x) = (2\pi)^{-1/2}\int_{-\infty}^\infty  e^{i \zeta x}Q(\zeta)d\zeta.
\ees
It follows that $\int K(x)dx = \sqrt{2\pi}Q(0)=1$ by the first condition in \eqref{Q-cond-ell_1},
\bes
2\|K^{(k-2)}\|_{L_\infty}\le \sqrt{2/\pi}\int_{-1}^1|\zeta^{k-2}Q(\zeta)|d\zeta \le \sqrt{2/\pi}\|Q\|_{L_1}\le C_1
\ees
for $k\ge 2$ by the second and third conditions in \eqref{Q-cond-ell_1}, and  for $\ell\in \{0,1\}$
\bes
\int |x^\ell K(x)|dx &\le& \bigg(\int (1+x^2)^{-1}dx \int (1+x^2)|x^\ell K(x)|^2dx\bigg)^{1/2}
\cr &=& \pi^{1/2}\big(\|Q^{(\ell)}\|_{L_2}^2+\|Q^{(\ell+1)}\|_{L_2}^2\big)^{1/2}
\ees
is bounded by $C_1$ by the Plancherel identity and the fourth conditions in \eqref{Q-cond-ell_1}.
\end{proof}

\begin{proof}[Proof of Theorem \ref{th-ell_alpha}]
Because $K(x)$ is the Fourier inversion of $Q(\zeta)$,
\bes
K(x) = (2\pi)^{-1/2}\int_{-\infty}^\infty  e^{i \zeta x}Q(\zeta)d\zeta,\
Q(\zeta) = (2\pi)^{-1/2}\int_{-\infty}^\infty  e^{-i \zeta x}K(x)dx.
\ees
Thus, $\int K(x)dx = \sqrt{2\pi}Q(0)=1$ by the first condition in \eqref{Q-cond-ell_1},
and for $p\in [0,1]$
\bes
\int |x|^p|K(x)| dx
&\le& \bigg(\int (1+x^2)^{-1}dx \int (1+x^2)|x|^{2p}|K(x)|^2dx\bigg)^{1/2}
\cr &\le& (2\pi)^{1/2}\max_{k=0,2}\|Q^{(k)}\|_{L_2}
\ees
is bounded by $C_1$ by the Plancherel identity and the fourth conditions in \eqref{Q-cond-ell_1}.
It follows from the above properties of the kernel $K(x)$ and
the H\"older smoothness of $f_0$ that
\bes
\big|f_h(x)-f_0(x)\big|
=\bigg|\int K(y)\big(f_0(x-hy) - f_0(x)\big)dy\bigg|
\le h^p\int |y|^{p} |K(y)|dy\le C_1 h^{p}.
\ees

By the Fourier inversion formula,
\bes
K_h(x-y)
&=& h^{-1}(2\pi)^{-1/2}\int e^{i(x-y)\zeta/h}Q(\zeta)d\zeta
\cr &=& (2\pi)^{-1/2}\int e^{iy\zeta }\big(e^{-ix \zeta }Q(-h\zeta)\big)d\zeta
\ees
For $0<p<1$, the Fourier transformation of $f_0^{(1)}(x)=p \cdot \sgn(x)|x|^{p-1}$ and
\bes
(2\pi)^{-1/2}\int f_0^{(1)}(x) e^{-ix\zeta}dx
= C_{p} (-i\zeta)/|\zeta|^{1+p}
\ees
with $C_{p} = (2/\pi)^{1/2}\Gamma(p+1)\sin(\pi p/2)\in (0,\sqrt{2/\pi}]$.
Thus, by the Plancherel identity,
\bes
f_h^{(1)}(x)
&=& \int K_h(x-y)f_0^{(1)}(y)dy
\cr &=& \int_{-1/h}^{1/h} \Big(e^{-ix\zeta }Q(-h\zeta)\Big)
\Big(C_{p} (i\zeta)/|\zeta|^{1+p}\Big)d\zeta.
\ees
We note that $f^{(1)}$ is real-valued so that we only need to apply the complex conjugate to its Fourier transformation in the application of the Plancherel identity.
Moreover, $C_{p}\|Q\|_{L_1}\le(2/\pi)^{1/2}\|Q\|_{L_1}\le C_1$
by the third condition in \eqref{Q-cond-ell_1}.
Consequently,
\bes
\|f_h^{(k)}\|_{L_\infty}
\le \int_{-1/h}^{1/h} \big|Q(-h\zeta)\big|
C_{p} |\zeta|^{k-1-p}d\zeta
\le h^{p-k}C_{p} \|Q\|_{L_1} \le C_1 h^{p-k}.
\ees
for all $k\ge 1$ and $0<p<1$.
Hence, \eqref{Q-cond-ell_1} implies \eqref{f-cond-alpha} with $0<p<1$ and $\eta_{p}(h) = C_1h^{p}$.
The conclusion follows from \eqref{th-separable-2} of Theorem \ref{th-separable}.
\end{proof}

\bibliographystyle{amsalpha}
\bibliography{FunctionEst-submit.bib}
\end{document}